\DeclareFontFamily{OT1}{pzc}{}
\DeclareFontShape{OT1}{pzc}{m}{it}{<-> s * [1.35] pzcmi7t}{}
\DeclareMathAlphabet{\mathcal}{OT1}{pzc}{m}{it}
\newcommand \aft{\circ}
\newcommand \bs{\backslash}
\newcommand \C{{\mathbb C}}
\newcommand \CA{\mathcal{A}}
\newcommand \CB{\mathcal{B}}
\newcommand \CF{\mathcal{F}}
\newcommand \CI{\mathcal{I}}
\newcommand \CJ{\mathcal{J}}
\newcommand \CT{\mathcal{T}}
\newcommand \Ga{\Gamma}
\newcommand \ga{\gamma}
\newcommand \HS{\mathcal{HS}}
\newcommand \Id{{\rm Id}}
\newcommand \Isom{\operatorname{Isom}}
\newcommand \mqed{\tag*\qedhere}
\newcommand \ol{\overline}
\newcommand \Rinj{\operatorname{InjR}}
\newcommand \sm{\smallsetminus}
\newcommand \supp{\operatorname{supp}}
\newcommand \tr{\operatorname{tr}}
\newcommand \vol{\operatorname{vol}}
\renewcommand \({\big(}
\renewcommand \){\big)}
\renewcommand \[{\left(}
\renewcommand \]{\right)}
\newcommand{\e}
[1]{\emph{#1}\index{#1}}
\newcommand{\tto}
[1]{\stackrel{#1}{\longrightarrow}}
\newtheorem{theorem}{Theorem}[section]
\newtheorem{lemma}[theorem]{Lemma}
\newtheorem{corollary}[theorem]{Corollary}
\newtheorem{proposition}[theorem]{Proposition}
\theoremstyle{definition}
\newtheorem{definition}[theorem]{Definition}
\newtheorem{example}[theorem]{Example}
\newtheorem{remark}[theorem]{Remark}
\begin{document}

\pagestyle{myheadings} \markright{Benjamini-Schramm and spectral convergence II}

\title{
\mbox{Benjamini-Schramm and spectral convergence II.}\\ The non-homogeneous case}
\author{Anton Deitmar}
\date{}
\maketitle

{\bf Abstract:}
The equivalence of spectral convergence and Benjamini-Schramm convergence is extended from homogeneous spaces to spaces which are compact modulo isometry group.
The equivalence is proven under the condition of a uniform discreteness property.
It is open, which implications hold without this condition.  

$$ $$

\section*{Introduction}

The notion of Benjamini-Schramm convergence (BS-convergence) originally was defined for  graphs see \cite{BS-original} or \cites{Abert1,Abert2,Abert3,Anan,Csik}.
It then got extended to complexes, surfaces and other manifolds \cites{Abert4,BS-AD,Gelander,Kionke,Levit,Mohammadi,Monk}.
Relations to number theory were revealed
\cites{Brumley,Fracyk}.
In \cite{LMS} it was brought into relation with quantum ergodicity.
Roughly, a sequence $(X_n)$ of  metric spaces is said to converges to a space $X$, if for any $R>0$ the probability of a ball in $X_n$ of radius $R$ being isomorphic with a ball in $X$ converges to 1 as $n\to\infty$.

In the paper \cite{7Samurais}, BS-convergence is 
used in the context of locally symmetric Riemannian manifolds of the form  $\Ga\bs G/K$, for a semisimple Lie group $G$, a maximal compact subgroup $K$ and a discrete subgroup $\Ga$.
In this context, a Benjamini-Schramm convergent sequence is the same as a \e{Farber sequence}, see \cites{Carderi,Farber}.
In \cite{7Samurais} it is shown, among other things, that the normalised spectral measures of a uniformly discrete sequence $(\Ga_n)$ of lattices in a connected semi-simple Lie group $G$ without center, weakly converges to the Plancherel measure, if the sequence of Riemannian manifolds $\Ga_n\bs G/K$ is BS-convergent to the symmetric space $G/K$, where $K$ is a maximal compact subgroup of $G$.

In the  paper \cite{BS-AD}, this assertion, together with its  converse, is extended to subgroups of arbitrary locally compact groups.
In the present paper, we extend the latter to  proper metric spaces $(X,d)$.
In the case of a homogeneous space, this boils down to the notion of the previous paper for metrisable locally compact groups.
In general, equivalences are shown up to one open question. The results of the paper can be summarised in the diagram
$$
\xymatrix{
(\Ga_n)\text{ is Plancherel}\ar@<-1ex>@{=>}[d]\ar@<1ex>@{=>}[r]\ar@<-1ex>@{<::}[r]
	&(\Ga_n)\text{ spectrally convergent to }\{1\}\ar@<-1ex>@{=>}[d]\\
(\Ga_n\bs G)\text{ is }BS\ar@<-1ex>@{::>}[u]\ar@{<=>}[r]
	&(\Ga_n\bs X)\text{ is }BS\ar@<-1ex>@{::>}[u]\\
}
$$
The right hand side of the diagram depicts the results of \cite{BS-AD}, the other 6 implications are subject of this paper. Let's go through the notions of the diagram.
The dotted arrows are only shown to hold under the additional assumption of uniform convergence, see Definition \ref{def3.3}.
The vertical arrows actually need this condition, whereas it is not clear, whether the horizontal dotted arrow holds without uniform discreteness.
A sequence of lattices $(\Ga_n)$ is Plancherel, if the spectrum of every invariant operator on $\Ga_n\bs G$ converges to the spectrum of the same operator on  $G$. The sequence is called spectrally convergent to $\{1\}$, if the same holds for $X$ instead of $G$.
The letters BS stand for {Benjamini-Schramm} convergence.

\tableofcontents

\section{The isometry group}

We collect some basic facts. We include the (simple) proofs for the convenience of the reader.
Recall that a metric space $X$ is called \e{proper}, if every closed ball of finite radius is compact.
Then $X$ is complete, locally compact and separable.

\begin{definition}
Let $(X,d)$ be a proper metric space.
By an \e{invariant measure} we mean a non-zero Radon measure $\mu$, which is invariant under all isometries of $X$.
\end{definition} 

\begin{remark}
In the following, we fix an invariant  measure $\mu$ on $X$. We shall be interested in $L^2$-spaces, so we can replace $X$ with the support of $\mu$ and thus assume that $\supp(\mu)=X$.
In particular, for a function $0\le f\in C(X)$ the equation $\int_Xf(x)\,d\mu(x)=0$ implies $f=0$.
\end{remark}

\begin{proposition}\label{prop111}
Let $X$ be a proper metric space and let $G=\Isom(X)$ be its isometry group.
Equip $G$ with the compact-open topology. 
\begin{enumerate}[\rm (a)]
\item $G$ is a metrisable  locally compact  group.
\item Each orbit $Gx$ is closed.
\item The quotient map $q:X\to G\bs X$ is open.
\item The quotient space $G\bs X$ is locally compact and metrisable.
\item 
The map $C_c(X)\to C_c(G\bs X)$, $f\mapsto f^G$ with $f^G(x)=\int_Gf(gx)\, dx$, according to some Haar-measure on $G$, is surjective.
\item If $G$ is unimodular, then for every $G$-invariant Radon measure $\mu$ on $X$ there exists a uniquely determined Radon measure on $G\bs X$ such that for every $f\in C_c(X)$ one has
$$
\int_X f(x)\, dx=\int_{G\bs X}\int_Gf(gx)\, dg\, dx.
$$
\item For every $x\in X$ the map $G\to X$, $g\mapsto gx$ is proper.
\end{enumerate}
These assertions also hold true, if $G$ is replaced by a closed subgroup $H\subset G$.
\end{proposition}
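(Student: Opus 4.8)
The plan is to derive everything from a single analytic core—an Arzelà–Ascoli argument—and then let the topological and measure-theoretic assertions cascade from it. Since $X$ is proper it is hemicompact, being exhausted by the compact balls $\overline{B(x_0,n)}$, so the compact-open topology on $C(X,X)$ is metrisable and restricts on $\Isom(X)$ to the topology of pointwise convergence. The substance is the following claim, which I would prove first: fixing a base point $x_0$ and setting $K_R=\{g\in G: d(gx_0,x_0)\le R\}$ for $R>0$, each $K_R$ is compact. Indeed every $g\in K_R$ is $1$-Lipschitz and maps $\overline{B(x_0,r)}$ into the compact set $\overline{B(x_0,r+R)}$, so $K_R$ is equicontinuous with relatively compact orbits, and Arzelà–Ascoli makes it relatively compact in $C(X,X)$; a limit of isometries is distance-preserving, and one upgrades distance-preservation to surjectivity using properness of $X$, so the limit is again an isometry and $K_R$ is closed, hence compact. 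This is the heart of the matter and the step I expect to require the most care, since the surjectivity of the limit is exactly where properness (rather than mere completeness) is indispensable.

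From this core lemma the assertions (a), (g), (b) follow quickly. The $K_R$ are compact neighbourhoods of the identity, so $G$ is locally compact; continuity of composition and inversion in the pointwise topology is routine, completing (a). For (g), the preimage under $g\mapsto gx$ of a compact set is contained in some $K_R$ and is closed, hence compact, so the orbit map is proper. A proper continuous map into a locally compact Hausdorff space is closed, and $Gx$ is its image, yielding (b).

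Next I would treat (c) and (d). Openness of $q$ is formal for any continuous action, since $q^{-1}(q(U))=\bigcup_{g\in G}gU$ is a union of open sets. For (d), the space $G\bs X$ is the image of the locally compact $X$ under the open continuous surjection $q$, hence locally compact, and it is second countable because $X$ is separable. Hausdorffness follows from properness of the action, i.e. closedness of the graph $\{(x,y):y\in Gx\}$, which again reduces to (g); Urysohn's theorem then gives metrisability, and one may even write down the explicit quotient metric $\bar d([x],[y])=\inf_{g}d(x,gy)$, whose positivity off the diagonal uses closedness of orbits.

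Finally, for surjectivity (e), given $\phi\in C_c(G\bs X)$ I would choose $\psi\in C_c(X)$ with $\psi\ge 0$ and $\psi^G>0$ on $\supp\phi$—possible because $q$ is open and $\psi^G$ is continuous with compact support by (g)—and set $f=\psi\cdot(\phi\circ q)/\psi^G$ on $\{\psi^G>0\}$ and $0$ elsewhere; then $f\in C_c(X)$ and $f^G=\phi$. Assertion (f) is then the Weil quotient-integral formula: using (e) one defines $\int_{G\bs X}f^G\,d\bar\mu:=\int_X f\,d\mu$ and must check it is well defined, that is, that $f^G=0$ forces $\int_X f\,d\mu=0$; unimodularity of $G$ is precisely what makes this consistent, and surjectivity forces uniqueness. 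All of these arguments invoke $G$ only through compactness of the $K_R$ and properness of the orbit map, so they transfer verbatim to any closed subgroup $H\subset G$, which inherits both properties.
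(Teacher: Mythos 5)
Your proposal is correct, and its analytic heart coincides with the paper's: both hinge on an Arzel\`a--Ascoli-type compactness statement for sets of isometries that move a base point a bounded distance (the paper carries this out by hand, via a diagonal subsequence argument for the unit ball of an explicit metric on $G$). The differences are organisational but genuine. You isolate the compactness of $K_R=\{g:d(gx_0,x_0)\le R\}$ as a single core lemma and then derive (g) directly (the preimage of a compact set is closed and contained in some $K_R$) and (b) as a corollary of (g) (a proper map is closed); the paper instead proves (b) independently by a second diagonal argument over a countable dense set, and proves (g) by reducing to compactness of the stabiliser $G_x$ via the homeomorphism $Gx\cong G/G_x$ and a projective-limit argument. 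Your route is more economical and makes the logical dependencies cleaner. You also explicitly confront the one delicate point that the paper passes over in silence, namely that a pointwise limit of isometries, while automatically distance-preserving, need not be surjective, and that properness of $X$ is what rescues surjectivity; you are right that this deserves care, and it is a small gap in the paper's own write-up rather than in yours. For (c) and (d) the arguments agree (formal openness of $q$, the quotient metric $\inf_g d(gx,y)$), and for (e) and (f) you supply the standard explicit constructions (the $\psi\cdot(\phi\circ q)/\psi^G$ trick and the Weil quotient-integral formula) where the paper simply cites Section~1.5 of \cite{HA2}; this is a matter of self-containedness, not of substance. The closing observation that everything depends on $G$ only through compactness of the $K_R$ and properness of the orbit map, hence transfers to closed subgroups, is exactly the right justification for the final sentence of the proposition.
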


\begin{proof}
(a) We fix some $x_0\in X$ and we claim that the topology is given by the metric
$$
d_{G,x_0}(f,g)=\sum_{n=1}^\infty\frac1{2^n}\sup_{x\in \ol B_n(x_0)} d(f(x),g(x)).
$$
The sum converges, since for $d(x,x_0)\le n$ we have
\begin{align*}
d(f(x),g(x))&\le d(f(x),f(x_0))+d(f(x_0),g(x_0))+d(g(x_0),g(x))\\
&=2d(x,x_0)+d(f(x_0)+g(x_0)).
\end{align*}
The sum defines a metric on $G$ and convergence in this metric means compact uniform convergence, hence the topology is the compact-open topology.
It remains to show that $G$ is locally compact.
As $G$ is a topological group, it suffices to show that the closed ball $\ol B_{G,1}(\Id)$ of radius one around the identity in $G$ is compact.
So let $f_j$ be a sequence in $\ol B_{G,1}(\Id)$.
Then in particular $d(f_j(x),x)\le 2^n$ for every $x\in\ol B_n(x_0)$ and so $d(f_j(x),x_0)\le 2^n+n$, i.e.,
$$
f_j\[\ol B_n(x_0)\]\subset \ol B_{n+2^n}(x_0).
$$
We shall construct a convergent subsequence.
For $n=0$ we have $\ol B_n(x_0)=\{ x_0\}$. The sequence $f_j(x_0)$ lies in the compact set $\ol B_1(x_0)$, hence has a convergent subsequence $f^{(0)}_j(x_0)$.
Next for $n\ge 1$ we assume given a subsequence $(f_j^{(n-1)})_j$ which converges pointwise on $\ol B_{n-1}(x_0)$.
We consider $f_j^{(n-1)}$ as an element of 
 the compact space $S=\prod_{x\in \ol B_n(x_0)}\ol B_{n+2^n}(x_0)$.
Let $A_j$ be the closure of $\{f_j^{(n-1)},f_{j+1}^{(n-1)},\dots\}$ in $S$. As $S$ is compact $A=\bigcap_jA_j$ is non-empty.
Let $a\in A_j$. Then there exists a subsequence $f_j^{(n)}$ of $f_j^{(n-1)}$ which converges to $a$ in $S$, which means that $f_j^{(n)}$ converges pointwise on $\ol B_n(x_0)$.
Then the subsequence $g_j=f_j^{(j)}$ converges pointwise to some $f:X\to X$ which is again an isometry.
As a pointwise convergent sequence of isometries between compact metric spaces converges uniformly, the claim follows.

(b)
Let $g_j$ be a sequence in $G$ such that $g_jx$ converges to some $z\in X$ and let $\delta=\max_jd(g_jx,z)$.
Let $A\subset X$ be a countable dense subset.
For $a\in A$, the sequence $g_ja$ lies in the compact set $\ol B_{d(x,a)+\delta}(z)$, hence has a convergent subsequence. A diagonal argument yields a subsequence, such that $g_ja$ converges to some $z_a$ for every $a\in A$. Then for every $y\in X$ the sequence $g_jy$ is Cauchy, hence convergent to some $z_y$. The map $h(y)=z_y$ is an isometry and is the pointwise limit of the $g_j$.
Hence $h(x)=z$ and so the orbit is closed.

(c)
Write $p:X\to G\bs X$ for the projection and let $U\subset X$ be open.
Then 
$$
p^{-1}\[p(U)\]=\bigcup_{g\in G} gU
$$
is open in $G$, which means that $p(U)$ is open in $G\bs X$.

(d)
Let $x\in X$ and let $U$ be a compact neighbourhood of $x$ in $X$.
Then $p(U)$ is a compact neighbourhood of $p(x)$, hence $G\bs X$ is locally compact.

We claim that
$$
d(Gx,Gy)=\inf_{g\in G}d(gx,y)
$$
defines a metric on $G\bs X$, which yields the quotient topology.
The properness of the metric $d$ together with part (b) implies that the infimum is attained, i.e., is a minimum.
This implies that $d_G$ is positive definite.
For the triangle inequality note that 
\begin{align*}
d_G(Gx,Gy)&=\inf_gd(gx,y)=\inf_h\inf_gd(gx,y)\\
&\le\inf_h\inf_g\(d(gx,hz)+d(hz,y)\)=d_G(x,z)+d_G(z,y).
\end{align*}
As the infimum is attained, it follows that $p(B_r(x))=B_r(Gx)$ for all $x\in X$, $r>0$ and this implies that the metric induces the quotient topology.

(e) and (f)
The proof proceeds as in Section 1.5 in \cite{HA2}.

(g) 
Let $x\in X$ and let $\phi:G\to X$, $g\mapsto gx$.
Let $C\subset X$ be compact. The orbit $Gx$ is homeomorphic to $G/K$, where $K=G_x$ is the stabiliser of $x$. Therefore it suffices to show that $K$ is compact.
The group $K$ preserves the compact ball $\ol B_r(x)$ for every $r>0$.
Now the isometry group of a compact metric space is compact. The group $K$ is the projective limit of all $\Isom(B_r(x))$ and therefore compact.
\end{proof}

\begin{definition}
Suppose that $G\bs X$ is compact and $X$ is equipped with a $G$-invariant Radon measure.
For a continuous, $G$-invariant  function $\phi$ on $X$ we define
$$
\CJ(\phi)=\int_{G\bs X}\phi(x)\, dx.
$$
The integral exists, as $G\bs X$ is compact.
\end{definition}

\section{Lattices}

\begin{definition}
Let $(X,d)$ be a proper metric space. For a point $x\in X$ and a radius $r>0$ we write $B_r(x)$ for the open ball of radius $r$ around $x$.
Let $\Ga$ be a group of isometries on $X$.
The \e{$\Ga$-radius}  of a point $x$ is defined to be
$$
\rho(\Ga,x)=\inf_{1\ne\ga\in\Ga}d(x,\ga x).
$$
The \e{radius} of $\Ga$ is
$$
\rho(\Ga)=\inf_{x\in X}\rho(\Ga,x).
$$
We also define the \e{injectivity radius} at $x$,
$$
\Rinj(\Ga,x)=\sup\big\{r>0: B_r(x)\subset X \text{ injects to }\Ga\bs X\big\}.
$$
We say that $\Ga$ is \e{cocompact}, if the quotient $\Ga\bs X$ is compact.
By a \e{cocompact lattice} we mean a cocompact group $\Ga$ of strictly positive injectivity radius.  
\end{definition}

\begin{lemma}
Suppose that $\rho(\Ga,x)>0$. Then one has
$$
\frac13\rho(\Ga,x)\le \Rinj(\Ga,x)\le \rho(\Ga,x).
$$ 
\end{lemma}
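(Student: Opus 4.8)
The plan is to prove the two inequalities separately. In each case I would begin by unpacking the meaning of the phrase that $B_r(x)$ \emph{injects} into $\Ga\bs X$: it means that the restriction to $B_r(x)$ of the quotient projection $q\colon X\to\Ga\bs X$ is injective, equivalently that no two distinct points of $B_r(x)$ lie in the same $\Ga$-orbit. The only tools needed are the triangle inequality and the fact that every $\ga\in\Ga$ acts as an isometry; the standing hypothesis $\rho(\Ga,x)>0$ is used to rule out nontrivial elements fixing $x$.

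For the upper bound $\Rinj(\Ga,x)\le\rho(\Ga,x)$ I would argue by contraposition. Fix any $r>\rho(\Ga,x)$. Since $\rho(\Ga,x)=\inf_{\ga\neq 1}d(x,\ga x)<r$, there is some $\ga\neq 1$ with $d(x,\ga x)<r$, so both $x$ and $\ga x$ lie in the open ball $B_r(x)$. Because $\rho(\Ga,x)>0$ we have $d(x,\ga x)\ge\rho(\Ga,x)>0$, whence $\ga x\neq x$; thus $x$ and $\ga x$ are distinct points of $B_r(x)$ with $q(x)=q(\ga x)$, so $B_r(x)$ does not inject. Hence every $r$ occurring in the defining supremum of $\Rinj(\Ga,x)$ satisfies $r\le\rho(\Ga,x)$, and taking the supremum yields the claim.

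For the lower bound I would fix $r\le\tfrac12\rho(\Ga,x)$ and suppose, for contradiction, that $B_r(x)$ fails to inject. Then there are distinct $y,z\in B_r(x)$ and $\ga\in\Ga$ with $z=\ga y$, and necessarily $\ga\neq 1$. Using that $\ga$ is an isometry together with the triangle inequality,
$$
d(x,\ga x)\le d(x,z)+d(z,\ga x)=d(x,z)+d(\ga y,\ga x)=d(x,z)+d(y,x)<2r\le\rho(\Ga,x).
$$
Since $\ga\neq 1$ we also have $d(x,\ga x)\ge\rho(\Ga,x)$, a contradiction. Therefore $B_r(x)$ injects for every $r\le\tfrac12\rho(\Ga,x)$, so $\Rinj(\Ga,x)\ge\tfrac12\rho(\Ga,x)\ge\tfrac13\rho(\Ga,x)$, which is (more than) the asserted bound.

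The argument is essentially routine, and I expect no serious obstacle. The one point needing care is the bookkeeping around the definition of ``injects'': one must verify that the identifying group element is nontrivial, so that $\rho(\Ga,x)$ can legitimately be invoked, and keep track of the strict inequalities produced by working with open balls. It is worth noting that the same computation in fact delivers the sharper constant $\tfrac12$ rather than $\tfrac13$, as one checks is optimal already for $\Z$ acting by translations on $\R$; the weaker constant $\tfrac13$ in the statement is comfortably implied.
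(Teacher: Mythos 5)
Your proof is correct, and both inequalities are established by the same elementary means as in the paper (unpacking ``injects'' and applying the triangle inequality), but your handling of the lower bound is genuinely different in one useful respect. The paper passes through the displacement of an intermediate point: from $r>\Rinj(\Ga,x)$ it extracts $y\in B_r(x)$ and $\ga\ne1$ with $d(y,\ga y)<r$ and then estimates $d(x,\ga x)\le d(x,y)+d(y,\ga y)+d(\ga y,\ga x)<3r$, yielding the constant $\tfrac13$. Note that with the stated definition of ``injects,'' a failure of injectivity only directly produces distinct $y,z=\ga y\in B_r(x)$, hence $d(y,\ga y)=d(y,z)<2r$ rather than $<r$, so the paper's intermediate claim requires an extra justification. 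Your two-step chain $d(x,\ga x)\le d(x,\ga y)+d(\ga y,\ga x)=d(x,z)+d(y,x)<2r$ avoids that issue entirely, proves the stated bound with room to spare, and delivers the optimal constant $\tfrac12$ (sharp already for $\Z$ acting on $\R$, as you observe). The bookkeeping on the upper bound and the nontriviality of the identifying element $\ga$ is handled correctly.
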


\begin{proof}
If $r>\rho(\Ga,x)$, then $r>\Rinj(\Ga,x)$, so the second inequality follows.
For the first let $r>\Rinj(\Ga,x)$, then we have to show that there is $\ga\in\Ga$, such that $d(x,\ga x)<3r$.
As $r>\Rinj(\Ga,x)$, there are $y\in B_r(x)$ and $\ga\in\Ga^*$ such that $d(y,\ga y)<r$.
Then
\begin{align*}
d(x,\ga x) &\le d(x,y)+d(y,\ga y)+d(\ga y,\ga x)\ <\ r+r+r=3r.\mqed
\end{align*}
\end{proof}

\begin{proposition}
Suppose the group $\Ga\subset G$ has radius $>0$.
Then
\begin{enumerate}[\rm (a)]
\item The group $\Ga$ acts freely on $X$.
\item For every compact set $K\subset X$ the set
$
\big\{\ga\in\Ga: K\cap\ga K\ne\emptyset\big\}
$
is finite.
\item Every point $x\in X$ has a neighbourhood $U$ such that $\ga\in\Ga\sm\{1\}\ \Rightarrow\ \ga U\cap U=\emptyset$.
This means that $\Ga$ is acting \e{properly discontinuously}.
\item Each orbit $\Ga x$ is discrete and closed in $X$, the quotient map $X\to\Ga\bs  X$ is a covering. 
\item Any lattice $\Ga$ is a cocompact discrete subgroup of $G$.
If $X$ admits a lattice, then $G$ is unimodular and $G\bs X$ is compact.
\item If $G\bs X$ is compact and $\Ga\subset G$ a torsion-free, discrete cocompact subgroup of $G$, then $\Ga$ is a lattice on $X$.
\end{enumerate}
\end{proposition}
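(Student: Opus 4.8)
The backbone of the whole proposition is a single uniform estimate, so I would record it first: since $\rho(\Ga,x)\ge\rho(\Ga)$ for every $x$, the hypothesis $\rho:=\rho(\Ga)>0$ gives
$$
d(x,\ga x)\ge\rho\qquad\text{for all }x\in X\text{ and all }\ga\in\Ga\sm\{1\}.
$$
Feeding this into the explicit metric $d_{G,x_0}$ of Proposition \ref{prop111}(a) yields $d_{G,x_0}(\ga,\Id)\ge\sum_{n\ge1}2^{-n}\rho=\rho$ for every $\ga\ne1$, so $\Ga$ is discrete, hence closed, in $G$. Parts (a)--(d) then fall out quickly. For (a), $\ga x=x$ would force $d(x,\ga x)=0<\rho$. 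For (c) I would take $U=B_r(x)$ with $r<\rho/2$; any $z\in U\cap\ga U$ gives $d(x,\ga x)\le d(x,z)+d(z,\ga x)<2r<\rho$, so $\ga U\cap U=\emptyset$. For (b), enclosing $K\subset\ol B_D(x_0)$ and using $k_1=\ga k_2$ gives $\ga x_0\in\ol B_{2D}(x_0)$; the orbit map $g\mapsto gx_0$ is proper (Proposition \ref{prop111}(g)), so its preimage of $\ol B_{2D}(x_0)$ is compact and meets the discrete closed set $\Ga$ in a finite set. For (d), $d(\ga_1x,\ga_2x)=d(x,\ga_1^{-1}\ga_2x)\ge\rho$ shows each orbit is $\rho$-separated, thus discrete and closed; combined with the free, properly discontinuous action from (a) and (c), the standard covering-space argument makes $X\to\Ga\bs X$ a covering with the sets $U$ of (c) as evenly covered charts.

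For (e) I would first note, via the preceding Lemma, that strictly positive injectivity radius is the same as $\rho(\Ga)>0$, so a lattice is covered by the above and is discrete. Cocompactness of $\Ga$ in $G$ I would get geometrically: compactness of $\Ga\bs X$ together with the open projection $X\to\Ga\bs X$ produces a compact $C\subset X$ with $\Ga C=X$, and then $L:=\phi^{-1}(C)$ is compact by properness, while writing $gx_0=\ga c$ shows $\ga^{-1}g\in L$, so $G=\Ga L$ and $\Ga\bs G$ is compact. If $X$ admits a lattice, the $\Ga$-invariant map $X\to G\bs X$ factors as a continuous surjection $\Ga\bs X\to G\bs X$, whence $G\bs X$ is compact. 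For unimodularity I would run the standard finite-covolume argument: pick $f\in C_c(G)$, $f\ge0$, with $\sum_{\ga\in\Ga}f(\ga g)\equiv1$ (possible as $\Ga L=G$), and unfold the left Haar integral over $\Ga\bs G$; since $\sum_\ga f(\ga gx)\equiv1$ as well, this gives $\vol(\Ga\bs G)=\int_Gf(gx)\,dg=\Delta(x)^{-1}\vol(\Ga\bs G)$ for all $x$, and finiteness of $\vol(\Ga\bs G)$ forces the modular function $\Delta\equiv1$.

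For (f) I would establish the two defining properties of a lattice separately. Compactness of $\Ga\bs X$ is again a compact-set chase: choosing compact $L\subset G$ with $\Ga L=G$ and compact $C'\subset X$ with $GC'=X$, the set $LC'$ is compact and $\Ga(LC')=X$. The real content is $\rho(\Ga)>0$, which I would prove in two stages. Pointwise positivity $\rho(\Ga,x)>0$: if $\ga_nx\to x$ with $\ga_n\ne1$, properness of $g\mapsto gx$ and discreteness make $\{\ga_n\}$ finite, so some $\ga\ne1$ fixes $x$; but then $\ga$ lies in the compact stabiliser $G_x$ (Proposition \ref{prop111}(g)), and $\Ga\cap G_x$, being discrete and compact, is finite, so $\ga$ is a torsion element---excluded by hypothesis. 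Lower semicontinuity of $x\mapsto\rho(\Ga,x)$: given $x_n\to x$ (passed to a subsequence realising the lower limit), pick $\ga_n\ne1$ with $d(x_n,\ga_nx_n)\le\rho(\Ga,x_n)+1/n$; then $d(x,\ga_nx)\le2d(x,x_n)+d(x_n,\ga_nx_n)$ stays bounded, the same properness--discreteness argument makes the $\ga_n$ assume finitely many values, and a value $\ga$ hit infinitely often satisfies $\rho(\Ga,x)\le d(x,\ga x)\le\liminf_n\rho(\Ga,x_n)$.

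Since $\rho(\Ga,\cdot)$ is $\Ga$-invariant, these two facts show it descends to a lower semicontinuous, everywhere positive function on the compact quotient $\Ga\bs X$, which therefore attains a strictly positive minimum equal to $\rho(\Ga)$. This last passage, upgrading pointwise positivity to a uniform bound, is the step I expect to be the main obstacle: it is the only point where torsion-freeness, the properness of the orbit maps, and the compactness of $\Ga\bs X$ must all be brought together, and none of these hypotheses can be dropped.
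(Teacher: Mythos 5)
Your argument is correct and uses the same underlying toolkit as the paper (the properness of the orbit maps from Proposition \ref{prop111}(g), compactness of quotients, and the uniform separation $d(x,\ga x)\ge\rho$), but several sub-arguments take a genuinely different route. For (b) the paper argues directly: infinitely many $\ga$ with $K\cap\ga K\ne\emptyset$ would produce a convergent sequence $\ga_jx$, contradicting the $\rho$-separation of orbit points; your reduction to ``a discrete closed subgroup meets a compact set in a finite set'' instead requires verifying that $\Ga$ is discrete in $G$, which you do via the explicit metric $d_{G,x_0}$ and its left invariance --- a computation the paper never writes down even though (e) asserts discreteness, so this is a worthwhile addition. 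In (e) the paper proves cocompactness of $\Ga$ in $G$ through the closed orbit $Gx$ and the compact stabiliser $G_x$, while you produce a compact $L$ with $G=\Ga L$ directly; more importantly, the paper's proof of (e) says nothing at all about unimodularity, whereas your unfolding argument with a function $f$ satisfying $f^\Ga\equiv1$ supplies it (to avoid circularity one should observe that the equality $\int_Gf=\int_Gh$ for any two such functions uses only left invariance of Haar measure, not a pre-existing quotient measure). In (f) the paper establishes compactness of $\Ga\bs X$ by an open-cover chase through the fibration over $G\bs X$ with compact fibres; your observation that $LC'$ is compact and $\Ga(LC')=X$ is shorter. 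For positivity of the radius the paper runs a single compactness argument --- take $d(x_n,\ga_nx_n)\to0$, pass to $x_n\to x$ in the compact quotient, deduce $d(x,\ga_nx)\to0$, and use properness, discreteness and torsion-freeness to force $\ga_n=1$ eventually --- which is exactly your ``pointwise positivity plus lower semicontinuity plus compact quotient'' scheme collapsed into one step. Both organisations are valid; yours isolates where torsion-freeness enters, the paper's is more economical.
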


\begin{proof}
(a) is clear. For (b) assume that the set is not finite and let $x\in K$. By compactness, there is a sequence $\ga_j\in\Ga$ such that $\ga_j x$ converges.
But $d(\ga_ix,\ga_jx)=d(\ga_j^{-1}\ga_ix,x)\ge\rho(\Ga)>0$, a contradiction.

For (c): Take $U=B_{r/2}(x)$, where $r$ is the $\Ga$-radius at $x$.
Let $\ga\in\Ga\sm\{1\}$.
Assume there exists $u\in U\cap \ga U$. Then $d(u,x),d(u,\ga x)<r/2$.
Hence $d(x,\ga x)<r$, which contradicts the notion of the  radius.

For (d): The discreteness of $\Ga x$ follows from the fact, that for $\ga\ne\tau$ in $\Ga$ the distance $d(\ga x,\tau x)=d(x,\ga^{-1}\tau x)$ is $\ge\rho(\Ga)>0$.
The covering part follows from (c).

(e) The orbit $Gx$ is closed by Proposition \ref{prop111}. There is a compact subset $C\subset X$, such that $C\cap Gx$ maps surjectively onto $\Ga\bs (Gx)$.
As $Gx$ is closed, $C\cap Gx$ is compact and so $\Ga\bs (Gx)\cong \Ga\bs G/G_x$ is compact.
By Proposition \ref{prop111}, part (g), the stabiliser group $G_x$ is compact. We infer that $\Ga\bs G$ is compact.
As the map $\Ga\bs X\to G\bs X$ is continuous, the space $G\bs X$ is compact, too.

(f) First we  show that $\Ga\bs X$ is compact. Let $(U_i)_{i\in I}$ be an open covering of $\Ga\bs X$.
The continuous map $\phi:\Ga\bs X\to G\bs X$ has compact fibres $\cong\Ga\bs G/G_x$, $x\in X$.
For each $Gx\in G\bs X$ there is a finite set $E_{Gx}\subset I$ such that 
$U_x=\bigcup_{i\in E_x}U_i\supset\phi^{-1}(Gx)$.
Let $V_x$ denote the set of all $y\in G\bs X$ such that $\phi^{-1}(y)\subset U_x$. Then $V_x=\phi(U_x^c)^c$ is an open neighbourhood of $x$, so there is a finite set $F\subset G\bs X$ such that $G\bs X=\bigcup_{x\in F}V_x$, and so $\Ga\bs X=\bigcup_{x\in F}\phi^{-1}(V_x)\subset\bigcup_{x\in F}\bigcup_{i\in E_x}U_i$ and we have found a finite subcover.
Next, the group $\Ga$ acts freely on $X$, since for $x\in X$ the stabiliser $\Ga_x$ equals $\Ga\cap G_x$, so it is discrete and compact, hence finite. But as $\Ga$ is torsion-free, the stabiliser must be trivial.
Finally, for the positivity of the radius, let $(\ga_n)$ be a sequence in $\Ga$ and $x_n\in X$ such that $d(x_n,\ga_n x_n)$ tends to zero.
By compactness of $\Ga\bs X$ we can assume $x_n\to x$ for some $x\in X$.
Since 
$$
d(x,\ga_n x)\le d(x,x_n)+d(x_n,\ga_nx_n)+\underbrace{d(\ga_nx_n,\ga_nx)}_{d(x_n,x)}
$$
we infer that $d(x,\ga_nx)$ also tends to zero.
As $g\mapsto gx$ is proper by Proposition \ref{prop111} and $\Ga$ is discrete, the sequence $(\ga_n)$ is eventually stationary, which means that $\ga_n=1$ for $n$ large.
We infer that $\Ga$ is a lattice in 
the sense of our definition.
\end{proof}

\vspace{10pt} 

\begin{center}
{\it For the rest of the paper we shall assume that a lattice exists.
}
\end{center}

\vspace{10pt}

\begin{lemma}\label{lem2.3}
Let $(X,d)$ be a proper metric space.
Then for every compact set $K\subset X$ and all $C,r>0$, there exists $T>0$ such that for every lattice $\Ga$ of  radius $\ge r$ one has
$$
\# \big\{ \ga\in\Ga: d(K,\ga K)\le C\big\}\ \le\ T.
$$
\end{lemma}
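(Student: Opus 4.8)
The plan is to reduce the count to a packing estimate for a single orbit, using the invariant measure $\mu$ to convert the $r$-separation coming from the radius bound into a genuine volume bound. The point to watch is that the bound $T$ must be uniform over \emph{all} lattices of radius $\ge r$, and this uniformity will be supplied by the $G$-invariance of $\mu$ rather than by any geometric regularity of $X$.

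First I would fix a point $x_0\in K$ and set $\rho=\sup_{k\in K}d(x_0,k)$, which is finite since $K$ is compact, so that $K\subset\ol B_\rho(x_0)$. For an element $\ga\in\Ga$ with $d(K,\ga K)\le C$, the infimum defining $d(K,\ga K)$ is attained, the distance being continuous on the compact set $K\times\ga K$; hence there are $k_1,k_2\in K$ with $d(k_1,\ga k_2)\le C$. Since $\ga$ is an isometry, $d(\ga k_2,\ga x_0)=d(k_2,x_0)\le\rho$, and therefore $d(x_0,\ga x_0)\le d(x_0,k_1)+d(k_1,\ga k_2)+d(\ga k_2,\ga x_0)\le 2\rho+C=:R$. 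Thus every such $\ga$ sends $x_0$ into the fixed ball $\ol B_R(x_0)$.

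Next I would use the radius hypothesis. Because $\rho(\Ga)\ge r>0$, the group $\Ga$ acts freely on $X$, so the map $\ga\mapsto\ga x_0$ is injective and it suffices to count the orbit points $\ga x_0$ lying in $\ol B_R(x_0)$. For $\ga\ne\ga'$ one has $d(\ga x_0,\ga' x_0)=d(x_0,\ga^{-1}\ga' x_0)\ge\rho(\Ga)\ge r$, so these orbit points are $r$-separated; hence the balls $B_{r/2}(\ga x_0)$ are pairwise disjoint and all contained in $\ol B_{R+r/2}(x_0)$.

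Finally I would invoke $\mu$. By $G$-invariance, $\mu(B_{r/2}(\ga x_0))=\mu(B_{r/2}(x_0))=:v$, and $v>0$ since $\supp(\mu)=X$ and $B_{r/2}(x_0)$ is a nonempty open set; moreover $V:=\mu(\ol B_{R+r/2}(x_0))<\infty$ because $X$ is proper (so the ball is compact) and $\mu$ is Radon. Summing over the disjoint balls gives $\#\{\ga\in\Ga:d(K,\ga K)\le C\}\cdot v\le V$, so $T=V/v$ works, depending only on $K$, $C$, $r$ through $x_0,\rho,R$ and the fixed $\mu$. The crux, and the only place requiring real care, is that the constant $v$ is the \emph{same} for every translate $\ga x_0$ and independent of $\Ga$; this is exactly where invariance of $\mu$ stands in for the volume-doubling one would use in a space of bounded geometry, and it is what makes the estimate uniform.
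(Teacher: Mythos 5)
Your proof is correct, but it takes a genuinely different route from the paper. The paper's argument is purely metric: it covers (an enlargement of) $K$ by finitely many balls of radius $r/4$ and shows by pigeonhole that more than $m^2$ elements $\ga$ with $K\cap\ga K\ne\emptyset$ would force two distinct $\ga,\tau$ to move some centre $p_j$ into the same $r/2$-ball, contradicting $\rho(\Ga)\ge r$; no measure is used. You instead reduce to counting orbit points $\ga x_0$ in a fixed ball $\ol B_R(x_0)$, observe that they are $r$-separated, and convert this into a packing bound via the invariant measure: the disjoint balls $B_{r/2}(\ga x_0)$ all have the same volume $v>0$ by $G$-invariance and fullness of $\supp\mu$, and all sit inside a compact ball of finite volume $V$, giving $T=V/v$. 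Every step checks out: the reduction to the base point via the triangle inequality, the injectivity of $\ga\mapsto\ga x_0$ and the $r$-separation from $\rho(\Ga)\ge r$, and the uniformity of $v$ across translates and across lattices. What your approach buys is an explicit, transparent constant and a clean illustration of why uniform discreteness yields uniform counting; what it costs is reliance on the standing assumption (fixed in Section 1) that an invariant measure with $\supp\mu=X$ exists, whereas the paper's covering argument proves the lemma for an arbitrary proper metric space exactly as stated, with no measure in sight. Since the paper does fix such a measure throughout, your proof is valid in context, but it is worth being aware that it establishes a formally less general statement. As a side remark, your handling of the constant $C$ (absorbing it into $R=2\rho+C$) is actually more explicit than the paper's, whose displayed argument treats the case $K\cap\ga K\ne\emptyset$ and leaves the passage to $d(K,\ga K)\le C$ implicit via enlarging $K$.
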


\begin{proof}
Cover $K$ with  open balls of radius $r/4$, so $K\subset B(p_1)\cup\dots\cup B(p_m)$. 
Then for $\ga\in \Ga$ one has $K\cap\ga K=\bigcup_{1\le i,j\le s}B(p_i)\cap \ga B(p_j)$.
If this is $\ne\emptyset$, there are $i,j$ with $d(p_i,\ga p_j)<r/2$.
If there are more than $m^2$ of such $\ga$, then there exists $\ga\ne\tau$ in $\Ga\sm\{1\}$ and $1\le i,j\le s$ such that $d(p_i,\ga p_j)<r/2$ and the same for $\tau$.
This means that $\frac r2>d(p_i,\tau p_j)=d(\ga\tau^{-1}p_i,\ga p_j)$.
Therefore,
\begin{align*}
r&\le d(p_i,\ga\tau^{-1}p_i)\\
&\le d(p_i,\ga p_j)+d(\ga p_j,\ga\tau^{-1} p_i)<r,
\end{align*}
contradicting the assumptions.
\end{proof}

In the following, we shall use a fixed invariant measure $\mu$  and we shall simply write $dx$ for $d\mu(x)$.
In the same fashion, we treat other measures, which will not lead to confusion as long as it is clear which space we take the  integral over.

\begin{lemma}
Let $\mu$ be an invariant measure on $X$. There exists a uniquely determined Radon measure on $G\bs X$ and for every lattice $\Ga$ there exist uniquely determined  Radon measures on $\Ga\bs X$ and $\Ga\bs G$, such that for every $f\in C_c(X)$ one has
\begin{align*}
\int_Xf(x)\ dx&=\int_{\Ga\bs X}\sum_{\ga\in\Ga}f(\ga x)\ dx\\
&=\int_{G\bs X}\int_{G}f(g x)\ dg\ dx\\
&=\int_{G\bs X}\int_{\Ga\bs G}\sum_{\ga\in\Ga}f(\ga g x)\ dg\ dx\\
\end{align*}
The maps that arise by integration,
\begin{align*}
C_c(X)&\to C(\Ga\bs X),&C_c(X)&\to C(G\bs X),&C(\Ga\bs X)&\to C(G\bs X)
\end{align*}
are surjective.
\end{lemma}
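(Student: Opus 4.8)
The plan is to treat the three quotient measures in turn and then glue the integration identities together. Since a lattice exists, the results above give that $G$ is unimodular and that $G\bs X$ is compact, and for a lattice $\Ga$ the quotients $\Ga\bs X$ and $\Ga\bs G$ are compact as well. The measure on $G\bs X$ together with the middle identity $\int_Xf\,dx=\int_{G\bs X}\int_Gf(gx)\,dg\,dx$ is then precisely Proposition \ref{prop111}(f), so that equality and the uniqueness of the $G\bs X$-measure need no further argument. Throughout I write $f^\Ga(\Ga x)=\sum_{\ga\in\Ga}f(\ga x)$ and $f^G(Gx)=\int_Gf(gx)\,dg$; the sum is finite for each $x$ because $\Ga$ acts properly discontinuously (only finitely many $\ga$ carry $\supp f$ into a fixed compact set), and $f^G$ is defined because $g\mapsto gx$ is proper by Proposition \ref{prop111}(g). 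Thus $f^\Ga\in C(\Ga\bs X)$ and $f^G\in C(G\bs X)$.

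The key construction is a Bruhat-type function. First I would produce $\psi\in C_c(X)$ with $\psi\ge 0$ and $\psi^\Ga\equiv 1$: assigning to each point of the compact space $\Ga\bs X$ a non-negative bump whose $\Ga$-average is positive there, finitely many of these sum to some $\phi\ge 0$ with $\phi^\Ga>0$ everywhere, and then $\psi=\phi/(\phi^\Ga\aft p)$ works, where $p:X\to\Ga\bs X$ is the projection. This at once yields surjectivity of $f\mapsto f^\Ga$ onto $C(\Ga\bs X)$, since $f=\psi\cdot(h\aft p)$ satisfies $f^\Ga=h$ for any $h\in C(\Ga\bs X)$. To build the measure on $\Ga\bs X$ I would set $I(h)=\int_Xf\,dx$ for any $f$ with $f^\Ga=h$. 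Well-definedness reduces to $f^\Ga=0\Rightarrow\int_Xf\,dx=0$, which follows by inserting $1=\psi^\Ga$, using $\Ga$-invariance of $\mu$ to replace $x$ by $\ga^{-1}x$ in each summand, and re-summing to reach $\int_X\psi\cdot(f^\Ga\aft p)\,dx=0$; the interchange of the finite $\Ga$-sum with the integral is harmless by compact support and proper discontinuity. Taking $\psi\ge 0$ and $h\ge 0$ shows $I$ is positive, so Riesz representation gives the Radon measure on $\Ga\bs X$ with $\int_{\Ga\bs X}f^\Ga\,dx=\int_Xf\,dx$, and surjectivity of $f\mapsto f^\Ga$ forces its uniqueness.

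For $\Ga\bs G$ I would invoke the classical quotient integral (Weil) formula for the discrete subgroup $\Ga$ of the unimodular group $G$, exactly as in Section 1.5 of \cite{HA2}: there is a unique Radon measure on $\Ga\bs G$ with $\int_GF(g)\,dg=\int_{\Ga\bs G}\sum_{\ga\in\Ga}F(\ga g)\,dg$ for all $F\in C_c(G)$. To get the last identity of the lemma I would fix $x$ and apply this to $F_x(g)=f(gx)$, which lies in $C_c(G)$ because $g\mapsto gx$ is proper; this turns $\int_Gf(gx)\,dg$ into $\int_{\Ga\bs G}\sum_{\ga\in\Ga}f(\ga gx)\,dg$, and substitution into the middle identity gives the third equality. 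For surjectivity of the three integration maps: the first is the surjectivity of $f\mapsto f^\Ga$ just obtained, the second is Proposition \ref{prop111}(e), and the third, $h\mapsto\int_{\Ga\bs G}h(g\,\cdot)\,dg$, is well defined and $G$-invariant in $x$ by right $G$-invariance of the $\Ga\bs G$-measure (using unimodularity); it sends $f^\Ga$ to $f^G$ by the same unfolding, and is therefore onto because $f\mapsto f^G$ already is.

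The main obstacle I anticipate is not a single deep fact but the careful bookkeeping around the Bruhat function $\psi$: proving its existence from compactness of $\Ga\bs X$, and then exploiting it to obtain simultaneously surjectivity, well-definedness, and positivity of the quotient functional, while justifying each interchange of the $\Ga$-sum with integration through proper discontinuity (Proposition \ref{prop111} and the lattice proposition above). Everything else is either a direct citation (the $G\bs X$-measure and the Weil formula as in \cite{HA2}) or a routine unfolding.
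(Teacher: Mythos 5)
Your proposal is correct and follows essentially the same route as the paper: the paper likewise reduces the measure constructions and the unfolding identities to the quotient integral formula of Section 1.5 of \cite{HA2} (whose Bruhat-function argument you spell out explicitly), and obtains surjectivity of $C(\Ga\bs X)\to C(G\bs X)$ from the commuting triangle together with surjectivity of the two maps out of $C_c(X)$, exactly as you do. The only difference is one of presentation: you write out the details that the paper delegates to the citation.
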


\begin{proof}
We start with the surjectivity assertions.
The first two are covered by Proposition \ref{prop111}.
For the third let $f\in C_c(X)$. By Section 1.5 of \cite{HA2} we get for $x\in X$ that
\begin{align*}
\int_{\Ga\bs G}\sum_{\ga\in\Ga}f(\ga g x)\ dg=\int_Gf(gx)\ dg.
\end{align*}
This means that the diagram
$$
\xymatrix{
C_c(X)\ar[r]\ar[dr]
	&C(\Ga\bs X)\ar[d]\\
	&C(G\bs X)
}
$$
commutes. The surjectivity of the vertical arrow now follows from the surjectivity of the other two.
The rest of the proof runs analogous to Section 1.5 of \cite{HA2}.
\end{proof}

\begin{definition}
A linear operator $A:C_c(X)\to C(X)$ is called an \e{invariant operator}, if for every isometry $\phi$ one has $AL_\phi=L_\phi A$, where $L_\phi(f)=f\aft(\phi^{-1})$.
Let $r>0$.
An invariant operator $A:C_c(X)\to C(X)$ is said to have \e{propagation speed} $\le r$, if 
$$
\supp(Af)\subset U_r(\supp(f))
$$
holds for every $f\in C_c(X)$, where $U_r(M)$ denotes the $r$-neighbourhood of the set $M\subset X$.
\end{definition}

\begin{remark}
\begin{enumerate}[\rm (a)]
\item The map $C_c(X)\to C(\Ga\bs X)$, $f\mapsto f^\Ga$, where 
$$
f^\Ga(x)=\sum_{\ga\in\Ga}f(\ga x)
$$
is surjective. The proof procedes as in Section 1.5 of \cite{HA2}.
\item If $A$ has finite propagation speed and $\Ga$ is a lattice, then there exists a uniquely determined operator $A_\Ga$ on $C(\Ga\bs X)$, such that $A(f)^\Ga=A_\Ga(f^\Ga)$ holds for every $f\in C_c(X)$, where $f^\Ga\in C(\Ga\bs X)$ is given by $f^\Ga(x)=\sum_{\ga\in \Ga} f(\ga x)$.
The map $A\mapsto A_\Ga$ is an algebra homomorphism.
\item If $A$ has a continuous kernel, i.e., there is a continuous map $k_A:X\times X\to\C$, such that $A(f)(x)=\int_Xf(y)\ k_A(x,y)\ dy$, then $A_\Ga$ has the kernel
$$
k_{A_\Ga}(x,y)=\sum_{\ga\in\Ga}k_A(x,\ga y).
$$
Further, the invariance of the operator $A$ is equivalent to
$$
k(gx,gy)=k(x,y)
$$
for all $x,y\in X$, $g\in G$.
The finite propagation speed is equivalent to the kernel being supported in a neighbourhood of the diagonal of the form
$$
U=\big\{ (x,y)\in X\times X: d(x,y)<R\big\}
$$
for some $R>0$.
\end{enumerate}
\end{remark}

\begin{lemma}
Let $\Ga$ be a lattice.
There exists a measurable set $\CF\subset X$ of representatives for $\Ga\bs X$, which has compact closure and for every compact subset $K\subset X$ the set
$$
\big\{\ga\in\Ga: \ga \CF\cap K\ne\emptyset\big\}
$$
is finite, in other words, $K$ is covered by a finite number of translates of $\CF$. Every lattice $\Ga$ is finitely generated.
\end{lemma}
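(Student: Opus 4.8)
The plan is to construct $\CF$ explicitly from a finite cover by evenly covered balls, to read off the finiteness and covering statements from local finiteness of the resulting tiling, and finally to extract a finite generating set from the combinatorics of that tiling. First, since $\rho(\Ga)>0$, the preceding lemma gives $\Rinj(\Ga,x)\ge\tfrac13\rho(\Ga)>0$, so every ball $B_\eps(x)$ with $\eps<\tfrac13\rho(\Ga)$ injects into $\Ga\bs X$; in particular $\ga B_\eps(x)\cap B_\eps(x)=\emptyset$ for $1\neq\ga\in\Ga$. Because $\Ga\bs X$ is compact and the projection $p\colon X\to\Ga\bs X$ is open, finitely many such balls $B_1,\dots,B_k$ have images covering $\Ga\bs X$. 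I then set $\CF_i=B_i\sm p^{-1}(p(B_1\cup\dots\cup B_{i-1}))$ and $\CF=\CF_1\cup\dots\cup\CF_k$. Since $\Ga$ is countable, $p^{-1}(p(A))=\bigcup_{\ga\in\Ga}\ga A$ is open, so each $\CF_i$ is the intersection of an open and a closed set, hence Borel. The injectivity of $p$ on each $B_i$ shows that $\CF$ meets every orbit at most once, and taking for a given orbit $O$ the least index $i$ with $B_i\cap O\neq\emptyset$ shows it meets every orbit at least once; thus $\CF$ is a Borel set of representatives for $\Ga\bs X$. Finally $\ol\CF\subset\ol B_1\cup\dots\cup\ol B_k$ is compact because $X$ is proper.

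Put $L=\ol\CF$. For a compact $K$, if $\ga\CF\cap K\neq\emptyset$ then $\ga(L\cup K)\cap(L\cup K)\neq\emptyset$, and by part (b) of the proposition on groups of positive radius (or by Lemma \ref{lem2.3}) only finitely many $\ga\in\Ga$ satisfy this. Since the translates $\ga\CF$ partition $X$, each point of $K$ lies in exactly one $\ga\CF$, and only finitely many of these meet $K$; hence $K$ is covered by finitely many translates of $\CF$, as claimed.

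For the finite generation I would take $S=\{\ga\in\Ga:\ga L\cap L\neq\emptyset\}$, finite by the same application of part (b), and show $\langle S\rangle=\Ga$. Form the graph on vertex set $\Ga$ with an edge joining $\ga$ and $\ga'$ whenever $\ga L\cap\ga' L\neq\emptyset$, equivalently whenever $\ga^{-1}\ga'\in S$; an edge-path $1=\ga_0,\ga_1,\dots,\ga_m=\ga$ yields $\ga=(\ga_0^{-1}\ga_1)\cdots(\ga_{m-1}^{-1}\ga_m)\in\langle S\rangle$, so it suffices to prove this graph connected. This is the step I expect to be the real obstacle. The tiles $\{\ga L\}_{\ga\in\Ga}$ form a closed cover of $X$ which is locally finite by part (b); if the graph split into two sides with no edge between them, the corresponding unions of tiles would be two disjoint nonempty closed sets covering $X$, contradicting connectedness, so the graph is connected and $\langle S\rangle=\Ga$. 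The essential input here is that $X$ is connected: without it the claim is false, since a countable non-finitely-generated group (for instance $\bigoplus_{n}\Z/2\Z$) carries a proper left-invariant metric and then acts on itself freely and cocompactly with positive radius while failing to be finitely generated. I would therefore carry out this last step under the hypothesis, satisfied in the motivating examples, that $X$ is connected.
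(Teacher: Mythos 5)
Your proof is correct, and it reaches the fundamental domain by a genuinely different route than the paper. The paper builds $\CF$ as a Dirichlet domain: it fixes $p_0\in X$, sets $F_1=\bigcap_{\ga}H_1(\ga)$ with $H_1(\ga)=\{y:d(y,p_0)\le d(\ga y,p_0)\}$, uses a $g\in C_c(X)$ with $g^\Ga\equiv1$ to see that $F_1$ is compact and already cut out by finitely many $H_1(\ga)$, and then measurably redistributes the boundary pieces $B(\ga)$ to obtain a set of representatives squeezed between the open and the closed Dirichlet domain; the finiteness of $\{\ga:\ga\CF\cap K\ne\emptyset\}$ is then obtained exactly as in your second paragraph, from part (b) of the proposition on groups of positive radius. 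Your greedy construction from finitely many evenly covered balls $B_i$ (possible since $\Rinj(\Ga,x)\ge\tfrac13\rho(\Ga)>0$ uniformly) is more elementary: it needs only openness of the projection and compactness of $\Ga\bs X$, and it avoids the bookkeeping with the sets $F_0$, $F_1$ and $B(\ga)$, at the harmless cost of a less canonical domain. Both constructions deliver what is actually used later, namely a Borel set of representatives with compact closure whose translates cover compacta finitely.

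On finite generation your treatment is not just different but more complete than the paper's: the paper's proof stops after the covering statement and gives no argument for the final sentence of the lemma, whereas you supply the standard argument via the graph on $\Ga$ with edges given by $S=\{\ga:\ga\ol\CF\cap\ol\CF\ne\emptyset\}$ and the clopen decomposition of $X$ into the two locally finite unions of tiles. Your caveat is a genuine catch: that argument needs $X$ connected, and without connectedness the claim is false --- a countable, non-finitely-generated group such as $\bigoplus_n\Z/2\Z$ with a proper (bi-)invariant metric acts on itself isometrically, freely, cocompactly and with positive radius, hence is a lattice in the sense of this paper, yet is not finitely generated. So the last assertion of the lemma requires connectedness of $X$ (or a substitute hypothesis) that the paper does not state; under that hypothesis your argument closes the gap correctly.
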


\begin{proof}
Fix a point $p_0\in X$ and let $F_1=F_1(p_0)$ be the set
$$
\big\{ y\in X: d(y,p_0)\le d(\ga y,p_0)\ \forall_{\ga\in\Ga}\big\}=\bigcap_{\ga\in\Ga}H_1(\ga),
$$
where $H_1(\ga)$ is the set of all $y\in X$ with $d(y,p_0)\le d(\ga y,p_0)$.
Since there is $g\in C_c(X)$ with $g^\Ga\equiv 1$, 
it follows that there exists $R>0$ such that every orbit $\Ga y$ meets the ball $B_R(y_0)$. From this ist follows that $F_1$ is compact and that it is the intersection of finitely many $H_1(\ga)$, i.e.,
$$
F_1=\bigcap_{\ga\in S}^nH_1(\ga)
$$
for some finite set $S\subset\Ga$.
Let $H_0(\ga)$ be the set of all $y\in X$ with $d(y,y_0)<d(\ga y,y_0)$ and let $F_0=\bigcap_{j=1}^nH_0(\ga_j)$.
Then $F_0$ is open and if $x\ne y$ are in $F_0$, then they belong to different orbits. On the other hand, $F_1$ meets every orbit and no orbit meets both $F_0$ and $F_1\sm F_0$. 
For $\ga\in S$ let $B(\ga)=F_1\sm H_0(\Ga)$ be the boundary component attached to $\ga$, then $F=F_0\sqcup\[\bigcup_{\ga\in S}B(\ga)\]$.
Fix $\ga_1\in S$ and let $F(\ga_1)$ be the set of all $y\in B(\ga_1)$ which do not lie in any other $B(\ga)$. This is an open subset of a closed subset, hence $F_0\cup F(\ga_1)$ is measurable and it retains the property that it meets every $\Ga$-orbit in at most one point.
Fetch another $\ga_2\in S$ and do the same thing, where this time you also subtract $\Ga F(\ga_1)=\bigcup_{\ga\in\Ga}\ga F(\ga_1)$ the process repeats until in finitely many steps you end up with a set of representatives $\CF=F(p_0)$ with $F_0\subset \CF\subset F_1$ which is a finite union of sets, each of which is  an intersection of a closed and an open set.
So $\CF$ is measurable and has compact closure.
Each compact ball $\ol B_R(p)$, with $R>0$ and $p\in X$ can be covered by finitely many translates $\ga \CF$ and since each compact subset sits in one such ball, the last claim follows. 
\end{proof}

\begin{definition}
Let $X$ be a proper metric space with an invariant measure.
Let $\CA(X)$ denote the algebra of all operators $A$ on $L^2(X)$, such that
\begin{enumerate}[\rm (a)]
\item $A$ is invariant under all isometries,
\item $A$ is given by a continuous integral kernel $k_A$,
\item $A$ has finite propagation speed.
\end{enumerate}
Note that the finite propagation speed and continuity imply that $k$ is bounded.
The adjoint $A^*$ of $A\in\CA(X)$ has kernel
$$
k_{A^*}(x,y)=\ol{k_A(y,x)}.
$$
Hence $\CA(X)$ is a $*$-subalgebra of the algebra of all bounded operators on $L^2(X)$.
\end{definition}

\begin{example}
Let $f:[0,\infty)\to \C$ be continuous of compact support. Then the kernel $k(x,y)=f(d(x,y))$ defines an element of $\CA$.
\end{example}

\begin{definition}
Recall that a \e{finite trace} on a $*$-algebra $\CB$ is a linear map $\tau:\CB\to\C$ such that for all $a,b\in\CB$ one has 
$
\tau(b^*b)\ge 0$ and $\tau(ab)=\tau(ba)
$
\end{definition}

\begin{proposition}
\begin{enumerate}[\rm (a)]
\item For every continuous function $\phi:X\to\C$, which is invariant under isometries, the expression
$$
\CI(\phi)=\frac1{\vol(\Ga\bs X)}\int_{\Ga\bs X}\phi(x)\,dx
$$
does not depend on the lattice $\Ga$.
\item
For every $A\in \CA(X)$ we define 
$$
\tr_\Ga(A)=\int_{\Ga\bs X}k_A(x,x)\ dx.
$$
The expression
$$
\CT(A)=\frac{\tr_\Ga(A)}{\vol(\Ga\bs X)}
$$
 does not depend on the choice of the lattice $\Ga$ and defines a finite trace on $\CA(X)$.
\end{enumerate}
\end{proposition}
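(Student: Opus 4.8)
The plan is to reduce part (b) to part (a), and to prove part (a) by disintegrating the integral over $\Ga\bs X$ along the projection $\Ga\bs X\to G\bs X$.

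First I would establish the disintegration formula
$$
\int_{\Ga\bs X}F(x)\,dx=\int_{G\bs X}\int_{\Ga\bs G}F(gx)\,dg\,dx
$$
valid for every $F\in C(\Ga\bs X)$, viewed as a $\Ga$-invariant continuous function on $X$. To see this, I would choose $f\in C_c(X)$ with $f^\Ga=F$, which is possible by the surjectivity of $C_c(X)\to C(\Ga\bs X)$ recorded above, and then read the identity off the chain of equalities in the preceding integration lemma: its left-hand side is $\int_X f\,dx$, while its right-hand side is $\int_{G\bs X}\int_{\Ga\bs G}\sum_{\ga}f(\ga gx)\,dg\,dx=\int_{G\bs X}\int_{\Ga\bs G}F(gx)\,dg\,dx$, and these coincide. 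Applying this to $F=\phi$, which lies in $C(\Ga\bs X)$ because $\phi$ is isometry-invariant and $\Ga\bs X$ is compact, and using $\phi(gx)=\phi(x)$ for $g\in G$, I obtain
$$
\int_{\Ga\bs X}\phi(x)\,dx=\vol(\Ga\bs G)\int_{G\bs X}\phi(x)\,dx=\vol(\Ga\bs G)\,\CJ(\phi).
$$
Taking $\phi\equiv 1$ gives $\vol(\Ga\bs X)=\vol(\Ga\bs G)\,\vol(G\bs X)$, so the factor $\vol(\Ga\bs G)$ cancels in the quotient and $\CI(\phi)=\CJ(\phi)/\vol(G\bs X)$, which is manifestly independent of $\Ga$. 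This settles (a). Here $G\bs X$ and $\Ga\bs G$ are compact by Proposition \ref{prop111} and its proof, so all volumes are finite.

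For (b) the independence is then immediate: the diagonal $\phi_A(x)=k_A(x,x)$ is continuous, and the invariance $k_A(gx,gy)=k_A(x,y)$ forces $\phi_A(gx)=\phi_A(x)$, so $\phi_A$ is isometry-invariant; hence $\CT(A)=\CI(\phi_A)$ is independent of $\Ga$ by (a). Positivity of the trace is a direct computation: since $A^*$ has kernel $\ol{k_A(y,x)}$, the operator $A^*A$ has diagonal $k_{A^*A}(x,x)=\int_X|k_A(z,x)|^2\,dz\ge 0$, the integral ranging only over a compact ball $B_R(x)$ by finite propagation speed; integrating over $\Ga\bs X$ gives $\tr_\Ga(A^*A)\ge 0$.

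The main work, and the step I expect to be the genuine obstacle, is the trace identity $\tr_\Ga(AB)=\tr_\Ga(BA)$. Setting $\Phi(x,z)=k_A(x,z)\,k_B(z,x)$, which is jointly $\Ga$-invariant and, for each fixed variable, compactly supported in the other by finite propagation speed, I would unfold
$$
\tr_\Ga(AB)=\int_{\Ga\bs X}\int_X\Phi(x,z)\,dz\,dx=\int_{\Ga\bs X}\int_{\Ga\bs X}\sum_{\ga\in\Ga}\Phi(\ga^{-1}x,z)\,dz\,dx,
$$
using $\Phi(x,\ga z)=\Phi(\ga^{-1}x,z)$, then interchange the two integrations over $\Ga\bs X$ and refold the $x$-variable via the identity $\int_{\Ga\bs X}\sum_\ga\Phi(\ga^{-1}x,z)\,dx=\int_X\Phi(x,z)\,dx$ to reach $\int_{\Ga\bs X}\int_X\Phi(x,z)\,dx\,dz$; renaming the variables turns this into $\tr_\Ga(BA)$. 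The delicate points are justifying the Fubini interchange and the two unfoldings; both are controlled by the finite propagation speed, which confines every inner integral to a fixed compact ball, together with the compactness of $\Ga\bs X$ and the local finiteness of the $\Ga$-sums guaranteed by Lemma \ref{lem2.3}. Combined with positivity, this shows that $\CT$ is a finite trace on $\CA(X)$.
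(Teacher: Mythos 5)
Your proof is correct, and for part (a) and the independence statement in (b) it coincides with the paper's argument: choose $g\in C_c(X)$ with $g^\Ga=\phi$, push it through the chain of integration formulas to get $\int_{\Ga\bs X}\phi\,dx=\vol(\Ga\bs G)\int_{G\bs X}\phi\,dx$, and specialise to $\phi\equiv1$ to obtain $\vol(\Ga\bs X)=\vol(G\bs X)\vol(\Ga\bs G)$, after which the factor cancels; the independence in (b) is then read off from (a) applied to the invariant diagonal $k_A(x,x)$. Where you genuinely diverge is the trace identity. The paper settles it in one line by asserting the \emph{pointwise} equality $k_{AB}(x,x)=k_{BA}(x,x)$, but this does not follow from the composition formula alone: the integrands $k_A(x,z)k_B(z,x)$ and $k_B(x,z)k_A(z,x)$ are different functions of $z$, and for a non-homogeneous $X$ (e.g.\ several parallel copies of $\R$ carrying different invariant weights) the pointwise identity can actually fail. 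You instead prove only the integrated statement $\tr_\Ga(AB)=\tr_\Ga(BA)$ by unfolding one variable, applying Fubini on the compact space $\Ga\bs X\times\Ga\bs X$, and refolding, with the interchange justified by finite propagation speed, compactness of $\Ga\bs X$ and Lemma \ref{lem2.3}; this is the standard argument and is all that the definition of a finite trace requires. You also check positivity $\CT(A^*A)\ge0$ explicitly via $k_{A^*A}(x,x)=\int_X|k_A(z,x)|^2\,dz$, which the paper leaves implicit. So your write-up is, if anything, more complete precisely at the point where the paper is terse; the only nitpick is that compactness of $G\bs X$ and $\Ga\bs G$ is established in part (e) of the proposition on lattices rather than in Proposition \ref{prop111} itself.
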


\begin{proof}
For (a) let $g\in C_c(X)$ such that $g^\Ga=\phi$.
Then
\begin{align*}
\int_{\Ga\bs X}\phi(x)\, dx
&=\int_X g(x)\,dx\\
&=\int_{G\bs X}g^G(x)\,dx\\
&=\int_{G\bs X}\int_{\Ga\bs G} g^{\Ga}(yx)\,dy\,dx\\
&=\int_{G\bs X}\int_{\Ga\bs G} \phi(yx)\,dy\,dx\\
&=\vol(\Ga\bs G)\int_{G\bs X}\phi(x)\,dx\\
\end{align*}
Setting $\phi=1$ we find $\vol(\Ga\bs X)=\vol(G\bs X)\ \vol(\Ga\bs G)$
and the claim follows.

(b) The independence of $\Ga$ follows from part (a).
It remains to show that $\CT$ is a finite trace.
For this recall that for $A,B\in\CA(X)$ one has 
$$
k_{AB}(x,y)=\int_Xk_A(x,z)k_B(z,y)\ dz
$$
and hence $k_{AB}(x,x)=k_{BA}(x,x)$.
\end{proof}

\begin{definition}
Let $\CA^2$ denote the algebra of all linear combinations of operators of the form $AB$ with $A,B\in\CA$.
The opertors in $\CA$ induce Hilbert-Schmidt operators on $L^2(\Ga\bs X)$ and so the operators in $\CA^2$ are trace class.
\end{definition}

\section{Benjamini-Schramm and spectral convergence}

\begin{definition}\label{defBSforX}
Let $(\Ga_n)$ be a sequence of lattices.
We say that the sequence $(\Ga_n\bs X)$  of metric spaces is \e{Benjamini-Schramm convergent}, or \e{BS convergent} to $X$, if for every $R>0$ the sequence
$$
P_{\Ga_n\bs X}\(\big\{x\in\Ga_n\bs X: \rho(\Ga_n,x)<R\big\}\),
$$
or equivalently the sequence
$$
P_{\Ga_n\bs X}\(\big\{x\in\Ga_n\bs X: \Rinj(\Ga_n,x)<R\big\}\)
$$
tends to zero, where $P_{\Ga_n\bs X}$ is the normalised measure on $\Ga_n\bs X$.
\end{definition}

\begin{definition}
Let $(\Ga_n)$ be a sequence of cocompact lattices in $G$.
We say that the sequence \e{converges spectrally} to $\{1\}$, if for every $A\in\CA^2$ the expression
$$
\frac1{\vol(\Ga_n\bs G)}\tr(A_{\Ga_n})
$$
converges to $\CT(A)$, as $n\to\infty$.
\end{definition}

\begin{definition}\label{def3.3}
A sequence of lattices $(\Ga_n)$ on $X$ is called \e{uniformly discrete}, if the  radii are bounded away from zero.
It is easy to see that this is equivalent to the same notion in \cite{BS-AD}.
\end{definition}

\begin{theorem}\label{thm3.6}
For a uniformly discrete sequence $(\Ga_n)$ of lattices on $X$, the following are equivalent.
\begin{enumerate}[\rm (a)]
\item The sequence $(\Ga_n)$ is spectrally convergent to $\{1\}$.
\item $(\Ga_n\bs X)$ is BS-convergent to $X$.
\end{enumerate}
The implication (a)$\Rightarrow$(b) also holds without the assumption of uniform discreteness.
\end{theorem}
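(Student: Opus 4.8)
The plan is to reduce both conditions to the vanishing of a single \emph{off-diagonal} quantity. For $A\in\CA^2$ the operator $A_{\Ga_n}$ has kernel $k_{A_{\Ga_n}}(x,y)=\sum_{\ga\in\Ga_n}k_A(x,\ga y)$, so that
$$
\frac{\tr(A_{\Ga_n})}{\vol(\Ga_n\bs G)}=\frac1{\vol(\Ga_n\bs G)}\int_{\Ga_n\bs X}\sum_{\ga\in\Ga_n}k_A(x,\ga x)\,dx.
$$
Splitting off the term $\ga=1$ and unfolding it over $\Ga_n\bs G$ (the function $x\mapsto k_A(x,x)$ is $G$-invariant, so the integration formulae of Proposition \ref{prop111}(f) give $\int_{\Ga_n\bs X}k_A(x,x)\,dx=\vol(\Ga_n\bs G)\int_{G\bs X}k_A(x,x)\,dx$), the $\ga=1$ contribution equals $\int_{G\bs X}k_A(x,x)\,dx$, which is independent of $n$ and, after the harmless normalisation $\vol(G\bs X)=1$ of $\mu$, equals $\CT(A)$. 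Writing $D_n(A)$ for the remaining sum over $\ga\neq1$, I obtain $\frac{\tr(A_{\Ga_n})}{\vol(\Ga_n\bs G)}=\CT(A)+D_n(A)$, whence condition (a) is \emph{equivalent} to $D_n(A)\to0$ for every $A\in\CA^2$. The whole proof then amounts to comparing $D_n(A)$ with the measure of the thin part $\{x:\rho(\Ga_n,x)<R\}$.

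For (b)$\Rightarrow$(a) I would fix $A\in\CA^2$ and a propagation radius $R$ for $A$, so that $k_A(x,\ga x)=0$ unless $d(x,\ga x)<R$; the inner sum is then supported on the thin set $\{\rho(\Ga_n,x)<R\}$. The crucial step is a counting bound uniform in $x$ and $n$, namely $\#\{\ga\in\Ga_n\sm\{1\}:d(x,\ga x)<R\}\le T$. I would choose a compact $K\subset X$ with $GK=X$ (possible because $G\bs X$ is compact and $q$ is open by Proposition \ref{prop111}(c)), write $x=gx'$ with $x'\in K$, and conjugate by $g$: this replaces $\Ga_n$ by $g^{-1}\Ga_n g$, a lattice of the \emph{same} radius $\ge r_0>0$ (uniform discreteness, radius being invariant under $g\in\Isom(X)$), and turns the count into $\#\{\ga'\in g^{-1}\Ga_n g:d(K,\ga'K)\le R\}\le T$ by Lemma \ref{lem2.3}, with $T=T(K,R,r_0)$ independent of $x$ and $n$. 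With $M=\snorm{k_A}_\infty$ this yields
$$
\snorm{D_n(A)}\le MT\;P_{\Ga_n\bs X}\(\{x:\rho(\Ga_n,x)<R\}\)\longrightarrow0
$$
by BS-convergence. This is the direction requiring uniform discreteness, and I expect the uniform counting bound (the conjugation reduction to Lemma \ref{lem2.3}) to be the main obstacle.

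For (a)$\Rightarrow$(b) I would produce, for each $R>0$, a nonnegative test operator detecting the thin part. Take $B\in\CA$ with kernel $k_B(x,y)=\psi(d(x,y))$, where $\psi\ge0$ is continuous, supported in $[0,s]$ and positive on $[0,s)$ with $s>R+\delta$, and set $A_R=B^2\in\CA^2$; then $k_{A_R}\ge0$ and $k_{A_R}(x,y)=\int_X\psi(d(x,z))\psi(d(z,y))\,dz$. Restricting the integral to $z\in B_\delta(x)$ gives, for $d(x,y)<R$, the bound $k_{A_R}(x,y)\ge ab\,\mu(B_\delta(x))\ge ab\,c_\delta=:c_R>0$, where $a,b>0$ are lower bounds for $\psi$ on the compact subintervals $[0,\delta]$ and $[0,\delta+R]$ of $[0,s)$, and $c_\delta=\inf_{x}\mu(B_\delta(x))>0$; positivity of $c_\delta$ follows because $x\mapsto\mu(B_\delta(x))$ is lower semicontinuous, $G$-invariant, and strictly positive (as $\supp\mu=X$), on the compact space $G\bs X$. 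Note this circumvents the possible absence of geodesics: the estimate only uses the region near $z=x$, not a midpoint between $x$ and $y$. Consequently, whenever $\rho(\Ga_n,x)<R$ there is $\ga\ne1$ with $d(x,\ga x)<R$, so $\sum_{\ga\ne1}k_{A_R}(x,\ga x)\ge c_R$, giving $D_n(A_R)\ge c_R\,P_{\Ga_n\bs X}(\{\rho(\Ga_n,x)<R\})\ge0$. Since (a) forces $D_n(A_R)\to0$, the thin part has vanishing measure for every $R$, i.e. (b) holds. As neither the construction of $A_R$ nor this lower bound uses uniform discreteness, this recovers the final assertion of the theorem.
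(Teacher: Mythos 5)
Your proof is correct and follows essentially the same route as the paper: split $\tr(A_{\Ga_n})/\vol(\Ga_n\bs X)$ into the diagonal term $\CT(A)$ plus an off-diagonal remainder $D_n(A)$, bound $D_n$ above by $CT\cdot P_{\Ga_n\bs X}(\{\rho(\Ga_n,\cdot)<R\})$ via Lemma \ref{lem2.3} for (b)$\Rightarrow$(a), and below by $c_R\cdot P_{\Ga_n\bs X}(\{\rho(\Ga_n,\cdot)<R\})$ via a nonnegative test operator for (a)$\Rightarrow$(b). You supply two details the paper leaves implicit---the explicit construction of the test kernel $A_R=B^2$ with its pointwise lower bound $c_R>0$, and the conjugation argument reducing the uniform count $\#\{\ga\ne 1: d(x,\ga x)<R\}\le T$ to Lemma \ref{lem2.3}---and both are sound.
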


Without the assumption of uniform discreteness, the implication (b)$\Rightarrow$(a) is false in general  as Example \ref{CounterEx} shows.

\begin{proof}
(a)$\Rightarrow$(b): Let $R>0$ and let $k\ge 0$ be the kernel of some $A\in\CA^2$ with $k(x,y)\ge 1$ if $d(x,y)\le R$.
For $g\in G$ and $R>0$ let $F(g)=\big\{ x\in X: d(x,g x)<R\big\}$.
Then
\begin{align*}
\frac1{\mu(\Ga_n\bs X)}\ \mu\(\big\{x\in\Ga_n\bs X: \rho(\Ga_n,x)<R\big\}\)
&=\frac1{\mu(\Ga_n\bs X)}\ \mu\[\Ga_n\bs\bigcup_{1\ne\ga\in\Ga_n}F(\ga)\big\}\]\\
&\le \frac1{\mu(\Ga_n\bs X)}\ \int_{\Ga_n\bs X}\sum_{1\ne\ga\in\Ga_n}k(x,\ga x)\ dx
\end{align*}
and the latter tends to zero, where we haven't used the unifom discreteness.

(b)$\Rightarrow$(a): 
Let $A\in\CA^2$. We have to show that the sequence
$\frac{\tr(A_{\Ga_n})}{\vol(\Ga_n\bs X)}-\CT(A)$ tends to zero.
Let $k$ be the kernel of $A$ and $k_\Ga$ be the one of $A_\Ga$, where $\Ga$ is a lattice.
Let $g\in C_c(X)$ with $g^\Ga\equiv 1$, where $g^\Ga(x)=\sum_{\ga\in\Ga}g(\ga x)$.
Writing $k_1(x,y)=\sum_{\ga\ne 1}k(x,\ga y)$, we have $k_\Ga=k+k_1$. By \cite{HA2}, Chapter 9 we have $\tr(A_\Ga)=\int_{\Ga\bs X}k_\Ga(x,x)\, dx$.
Let $R\ge 0$ be the propagation speed of $A$, let $C=\sup_{x,y\in X}|k(x,y)|$ and let $T$ be the bound of Lemma \ref{lem2.3}.
Noting that $k_1(x,x)$ is $\Ga$-invariant, we get
\begin{align*}
\left|\frac{\tr(A_{\Ga_n})}{\vol(\Ga_n\bs X)}-\CT(A)
\right|&\le\frac1{\vol(\Ga_n\bs X)}\int_{\Ga_n\bs X}|k_1(x,x)|\,dx\\
&\le\frac C{\vol(\Ga_n\bs X)}\int_{\Ga_n\bs X}\#\{\ga\ne 1: d(x,\ga x)\le P\}\,dx\\
&\le CT\ P_{\Ga_n\bs X}\(\{x\in\Ga_n\bs X:\rho(\Ga_n,x)\le R\}\),
\end{align*}
which tends to zero as $n\to\infty$.
\end{proof}

\section{BS-convergence for groups}
\begin{definition} (BS-convergence for groups, see \cite{BS-AD})
We say that a sequence $(\Ga_n)$ of lattices in $G$ is \e{Benjamini-Schramm convergent} or \e{BS-convergent} to $\{1\}$, if for every compact set $C\subset G$ the sequence
$$
P_{\Ga_n\bs G}\(\big\{x\in \Ga_n\bs G: x^{-1}\Ga_n^\star x\cap C\ne \emptyset\big\}\)
$$
tends to zero,  where $P_{\Ga_n\bs G}$ is the normalised invariant volume on $\Ga_n\bs G$.
\end{definition}

\begin{definition}
Let $\CA(G\bs X)$ denote the $\sigma$-algebra generated by all analytic subsets of $G\bs X$, see \cite{Cohn}.
By Theorem 8.5.3 of \cite{Cohn}, there exists a $\CA(G\bs X)$, $\CB(X)$ measurable cross section $\sigma:G\bs X\to X$ to the projection $X\to G\bs X$.
\end{definition}

\begin{theorem}\label{thm4.3}
Let $(\Ga_n)$ be a  sequence of lattices.
Then the following are equivalent.
\begin{enumerate}[\rm (a)]
\item $(\Ga_n\bs X)$ is BS-convergent to $X$.
\item $\Ga_n$ is BS-convergent to  $\{1\}$.
\end{enumerate}
\end{theorem}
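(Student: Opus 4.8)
The plan is to reduce both conditions to a single quantity attached to the fibration $q\colon X\to G\bs X$. Fix a measurable cross-section $\sigma\colon G\bs X\to X$ as provided above; since $q$ is open and $G\bs X$ is compact, I would first arrange that $\sigma$ takes values in a fixed compact set $K\subset X$ with $q(K)=G\bs X$ (such a $K$ exists by openness of $q$, and a section into $K$ by measurable selection). For a lattice $\Ga_n$ the function $\rho(\Ga_n,\cdot)$ is $\Ga_n$-invariant, so the set $\{x:\rho(\Ga_n,x)<R\}$ descends to $\Ga_n\bs X$. Applying the integration formula of Section 2 to the indicator of this invariant set and dividing by $\vol(\Ga_n\bs X)=\vol(G\bs X)\,\vol(\Ga_n\bs G)$, I would record the identity
$$ P_{\Ga_n\bs X}\left(\{x:\rho(\Ga_n,x)<R\}\right)=\frac1{\vol(G\bs X)}\int_{G\bs X} g_n(\xi,R)\,d\xi, $$
where $g_n(\xi,R)=P_{\Ga_n\bs G}\left(\{g\in\Ga_n\bs G:\rho(\Ga_n,g\sigma(\xi))<R\}\right)$; the integrand is well defined since $\rho(\Ga_n,\ga g\sigma(\xi))=\rho(\Ga_n,g\sigma(\xi))$ for $\ga\in\Ga_n$.

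Next I would translate this fibrewise quantity into the group condition. Writing $x_0=\sigma(\xi)$ and using $d(gx_0,\ga gx_0)=d(x_0,g^{-1}\ga g x_0)$, the inequality $\rho(\Ga_n,gx_0)<R$ holds exactly when $g^{-1}\Ga_n^\star g\cap C(x_0,R)\neq\emptyset$, where $C(x_0,R)=\{h\in G:d(x_0,hx_0)<R\}$. By Proposition \ref{prop111}(g) the orbit map $g\mapsto gx_0$ is proper, so $C(x_0,R)$ is relatively compact; and a triangle-inequality comparison with a fixed base point $x_*\in K$, valid uniformly because $x_0$ ranges over the compact set $K$, shows that all the sets $C(\sigma(\xi),R)$ lie inside one compact set $C_R\subset G$. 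Thus $g_n(\xi,R)=P_{\Ga_n\bs G}\left(\{g:g^{-1}\Ga_n^\star g\cap C(\sigma(\xi),R)\neq\emptyset\}\right)$. The implication (b)$\Rightarrow$(a) is then immediate: since $C(\sigma(\xi),R)\subset C_R$ for every $\xi$, one has $g_n(\xi,R)\le P_{\Ga_n\bs G}\left(\{g:g^{-1}\Ga_n^\star g\cap C_R\neq\emptyset\}\right)$, which is independent of $\xi$ and tends to $0$ by BS-convergence of $(\Ga_n)$ to $\{1\}$ applied to $C_R$; integrating over $G\bs X$ and using the displayed identity gives (a).

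The implication (a)$\Rightarrow$(b) is the main obstacle, and I expect it to need a thickening argument. Given a compact $C\subset G$, fix $x_0\in X$ and choose $R_0$ with $\sup_{h\in C}d(x_0,hx_0)<R_0$ (finite by compactness of $C$), so $C\subset C(x_0,R_0)$ and it suffices to prove $g_n(\xi_0,R_0)\to 0$ for the single point $\xi_0=q(x_0)$. The difficulty is that (a) controls only the \emph{average} of $g_n(\cdot,R_0)$ over $G\bs X$, not its value on one fibre. To bridge this I would use that $\rho(\Ga_n,\cdot)$ is $2$-Lipschitz and that elements of $G$ are isometries: for $x'$ with $d(x_0,x')<\delta$ one has $\{g:\rho(\Ga_n,gx_0)<R_0\}\subset\{g:\rho(\Ga_n,gx')<R_0+2\delta\}$. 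Let $V=q(B_\delta(x_0))$, which is open and of positive volume since $q$ is open and $\mu$ has full support. For each $\xi\in V$ pick a representative $x_\xi\in B_\delta(x_0)$ of $\xi$; since $G$ is unimodular, right translation preserves the measure on $\Ga_n\bs G$, so $g_n(\xi,\cdot)$ does not depend on the choice of representative, and the inclusion above gives $g_n(\xi_0,R_0)\le g_n(\xi,R_0+2\delta)$ for all $\xi\in V$.

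Averaging this over $V$ and invoking the displayed identity yields
$$ g_n(\xi_0,R_0)\le\frac1{\vol(V)}\int_{G\bs X} g_n(\xi,R_0+2\delta)\,d\xi=\frac{\vol(G\bs X)}{\vol(V)}\,P_{\Ga_n\bs X}\left(\{x:\rho(\Ga_n,x)<R_0+2\delta\}\right), $$
whose right-hand side tends to $0$ by (a), the constant being independent of $n$. Hence $g_n(\xi_0,R_0)\to 0$, and since $C\subset C(x_0,R_0)$ this shows $P_{\Ga_n\bs G}(\{g:g^{-1}\Ga_n^\star g\cap C\neq\emptyset\})\to 0$, giving (b). Note that no uniform discreteness is used anywhere, consistent with the unconditional statement of the theorem.
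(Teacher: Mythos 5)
Your proof is correct, and the skeleton --- disintegrating $P_{\Ga_n\bs X}$ over $G\bs X$ via a measurable section, so that BS-convergence of $(\Ga_n\bs X)$ becomes a statement about the average of $g_n(\xi,R)$ --- is the same as the paper's. Your direction (b)$\Rightarrow$(a) is essentially the paper's argument (the paper uses dominated convergence of the fibrewise probabilities; your uniform compact set $C_R$, obtained from properness of the orbit map and a section landing in a compact set, gives the same conclusion with a uniform bound instead). The genuine divergence is in (a)$\Rightarrow$(b). The paper argues that since the integral tends to zero the integrand tends to zero for at least one $\xi$ (strictly speaking, $L^1$-convergence only yields a.e. convergence along a subsequence), and then invokes Proposition 2.4 of \cite{BS-AD} as a black box to pass from a single fibre to BS-convergence of the groups. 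You replace both steps by a direct argument: the identity $\{g:\rho(\Ga_n,gx_0)<R\}=\{g:g^{-1}\Ga_n^\star g\cap C(x_0,R)\ne\emptyset\}$ together with properness of $g\mapsto gx_0$ shows that every compact $C\subset G$ is captured by some $C(x_0,R_0)$, and the $2$-Lipschitz continuity of $\rho(\Ga_n,\cdot)$ combined with right-translation invariance of the measure on $\Ga_n\bs G$ (unimodularity, which holds because a lattice exists) dominates the single-fibre quantity $g_n(\xi_0,R_0)$ by $\frac{\vol(G\bs X)}{\vol(V)}$ times the average at the slightly larger radius $R_0+2\delta$. This buys a self-contained proof that avoids the external reference and sidesteps the delicate ``a.e.\ convergence'' step, at the cost of some bookkeeping (section with values in a compact set, positivity of $\vol(V)$ from $\supp\mu=X$, independence of $g_n(\xi,\cdot)$ from the choice of representative), all of which you handle correctly.
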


\begin{proof}
(a)$\Rightarrow$(b):
For every measurable $A\subset \Ga_n\bs X$ we have 
\begin{align*}
P_{\Ga_n\bs X}(A)
&=\int_{G\bs X} P_{\Ga_n\bs G}\(\{g\in \Ga_n\bs G: gx\in A\}\)\ dx\\
&=\int_{G\bs X} P_{\Ga_n\bs G/G_{\sigma(x)}}\(\{g\in \Ga_n\bs G/G_{\sigma(x)}: gx\in A\}\)\ dx.
\end{align*}
As this tends to zero, we get that $P_{\Ga_n\bs G/G_{\sigma(x)}}\(\{g\in \Ga_n\bs G/G_{\sigma(x)}: gx\in A\}\)$ tends to zero almost everywhere in $x$, so in particular, there exists one such $x$.
Proposition 2.4 of \cite{BS-AD} is formulated for Lie groups only, but the proof applies more generally to give that $\Ga_n$ is BS-convergent to  $\{1\}$.

(b)$\Rightarrow$(a):
Again by Proposition 2.4 of \cite{BS-AD} we get that
the intgerand in the last displayed integral tends to zero pointwise. The claim then follows by dominated convergence.
\end{proof}

\begin{definition}
Let $\Ga$ be a lattice. Then $\Ga\bs G$ is compact.
Let $C_c^\infty(G)$ be the convolution algebra of test functions \cites{Bruhat,traceclass}.
It acts by right convolution through trace class operators
$R(f)$, $f\in C_c^\infty(G)$  on $L^2(\Ga\bs G)$.
In \cite{BS-AD} we called a sequence of lattices $(\Ga_n)$ a \e{Plancherel sequence}, if  $\frac1{\vol(\Ga_n\bs G)}\tr(R(f))$ converges to $f(1)$ for every $f\in C_c^\infty(G)$.
\end{definition}

\begin{corollary}
Let $(\Ga_n)$ be a uniformly discrete sequence of lattices.
Then the following are equivalent.
\begin{enumerate}[\rm (a)]
\item $(\Ga_n)$ is spectrally convergent to $\{1\}$.
\item $\Ga_n$ is BS-convergent to  $\{1\}$.
\item $(\Ga_n\bs X)$ is BS-convergent to $X$.
\end{enumerate}
The implications (a)$\Rightarrow$(b)$\Leftrightarrow$(c) also hold without the uniform discretenes assumption.
\end{corollary}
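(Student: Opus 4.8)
The plan is to obtain this statement purely by composing the two equivalences already established in the paper, so that no new analysis is required. The relevant inputs are Theorem \ref{thm3.6}, which ties spectral convergence to $\{1\}$ together with BS-convergence of the quotients $(\Ga_n\bs X)$, and Theorem \ref{thm4.3}, which ties BS-convergence of the quotients together with BS-convergence of the groups $\Ga_n$ to $\{1\}$. Reading the corollary's three conditions as (a), (b), (c), the first step I would take is to observe that Theorem \ref{thm4.3} already delivers the equivalence (b)$\Leftrightarrow$(c) for an arbitrary sequence of lattices, with uniform discreteness playing no role at all; this half of the work is therefore unconditional.

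Next I would invoke Theorem \ref{thm3.6}. Under the standing uniform discreteness hypothesis it gives the full equivalence (a)$\Leftrightarrow$(c), and concatenating this with the unconditional (b)$\Leftrightarrow$(c) from the previous paragraph immediately yields the chain (a)$\Leftrightarrow$(b)$\Leftrightarrow$(c), which is the main assertion. For the final clause I would use the fact, recorded in Theorem \ref{thm3.6}, that the single implication (a)$\Rightarrow$(c) survives without uniform discreteness. Composing (a)$\Rightarrow$(c) with the implication (c)$\Rightarrow$(b) contained in the still-unconditional equivalence (b)$\Leftrightarrow$(c) produces (a)$\Rightarrow$(b); together with (b)$\Leftrightarrow$(c) this is exactly the stated (a)$\Rightarrow$(b)$\Leftrightarrow$(c) in the general case.

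Since all the substantive content lives in the two cited theorems, this is essentially a bookkeeping deduction rather than a proof with a genuine obstacle. The one point demanding care is to ensure that the unconditional part does not covertly reintroduce uniform discreteness: the only implication among the three that requires it is the reverse direction (c)$\Rightarrow$(a) of Theorem \ref{thm3.6}, and indeed this direction can fail in its absence, as Example \ref{CounterEx} shows. I would therefore check explicitly that the general-case argument never appeals to (c)$\Rightarrow$(a), using only (a)$\Rightarrow$(c) and the equivalence (b)$\Leftrightarrow$(c), so that the dichotomy between the two displayed assertions is logically clean.
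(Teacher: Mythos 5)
Your proposal is correct and is essentially the paper's own argument: the paper's proof is a one-line citation of Theorems \ref{thm3.6} and \ref{thm4.3}, and your composition of (a)$\Leftrightarrow$(c) (with the unconditional (a)$\Rightarrow$(c)) from the former with the unconditional (b)$\Leftrightarrow$(c) from the latter is exactly the intended bookkeeping. Your explicit check that the general-case chain never invokes the direction (c)$\Rightarrow$(a) is a sound precaution and matches the paper's intent.
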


\begin{proof}
This follows from Theorems \ref{thm3.6} and \ref{thm4.3} together with Theorem 2.6 of \cite{BS-AD}.
\end{proof}

\section{Plancherel sequences}
\begin{definition}
Let $G$ be a metrisable locally compact group.
Then the previous sections apply in the special case $X=G$.
If a sequence $(\Ga_n)$ is spectrally convergent to $\{1\}$ for the space $X=G$, we say that this sequence is a \e{Plancherel sequence}. See \cite{BS-AD} for more on Plancherel sequences.
\end{definition}

\begin{theorem}
Every Plancherel sequence is spectrally convergent.
\end{theorem}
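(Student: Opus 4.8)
The plan is to derive spectral convergence on $X$ from the Plancherel hypothesis on $G$ by working fibrewise over the compact base $G\bs X$. Spelling out the Plancherel property (spectral convergence to $\{1\}$ for $X=G$) on a right convolution operator $R(f)$, $f\in C_c(G)$, it reads $\frac1{\vol(\Ga_n\bs G)}\tr\big(R(f)_{\Ga_n}\big)\to\CT_G\big(R(f)\big)=f(1)$; since $\tr\big(R(f)_{\Ga_n}\big)=\int_{\Ga_n\bs G}\sum_{\ga\in\Ga_n}f(g^{-1}\ga g)\,dg$ and the $\ga=1$ term contributes exactly $f(1)\vol(\Ga_n\bs G)$, this is equivalent to
$$
\delta_n(f):=\frac1{\vol(\Ga_n\bs G)}\int_{\Ga_n\bs G}\sum_{1\ne\ga\in\Ga_n}f(g^{-1}\ga g)\,dg\ \longrightarrow\ 0 .
$$
Given $A\in\CA^2(X)$ with $\Isom(X)$-invariant kernel $k$, bounded by $C=\sup|k|$ and supported in $\{d(x,y)<R\}$, the quantity to control is, exactly as in the proof of Theorem \ref{thm3.6}, the off-diagonal remainder $\CE_n=\tfrac{\tr(A_{\Ga_n})}{\vol(\Ga_n\bs X)}-\CT(A)=\frac1{\vol(\Ga_n\bs X)}\int_{\Ga_n\bs X}\sum_{1\ne\ga\in\Ga_n}k(x,\ga x)\,dx$.

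First I would fix a measurable cross section $\sigma\colon G\bs X\to X$ whose image lies in the relatively compact fundamental set $\ol\CF$ of the fundamental-domain lemma, and set $p=\sigma(\bar x)$, $\psi_p(h)=k(p,hp)$. The invariance $k(gp,\ga gp)=k(p,g^{-1}\ga g\,p)$ together with properness of $g\mapsto gp$ (Proposition \ref{prop111}(g)) shows that each $\psi_p$ is a continuous, bi-$G_p$-invariant function with $\sup|\psi_p|\le C$, supported in the \emph{fixed} compact set $C_0=\big\{h\in G: h\,\ol\CF\cap U_R(\ol\CF)\ne\emptyset\big\}$, independent of $\bar x$. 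Since $\Phi_1(x)=\sum_{1\ne\ga\in\Ga_n}k(x,\ga x)$ is a continuous $\Ga_n$-invariant function (finitely many terms at each point by Proposition \ref{prop111}(b)), the integration lemma unfolds the remainder over $G\bs X$ and $\Ga_n\bs G$, and on each fibre
$$
\int_{\Ga_n\bs G}\Phi_1\big(g\sigma(\bar x)\big)\,dg=\int_{\Ga_n\bs G}\sum_{1\ne\ga\in\Ga_n}\psi_p(g^{-1}\ga g)\,dg ,
$$
the off-diagonal part of the convolution trace $\tr\big(R(\psi_p)_{\Ga_n}\big)$.

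Now I would apply the hypothesis fibrewise while dominating uniformly in $\bar x$, which is the point that avoids uniform discreteness. Choosing one $f\in C_c(G)$ with $f\ge\1_{C_0}$, the bound $\big|\sum_{1\ne\ga}\psi_p(g^{-1}\ga g)\big|\le C\sum_{1\ne\ga}f(g^{-1}\ga g)$ holds for \emph{every} $\bar x$ because all $\psi_p$ are supported in the single compact set $C_0$ and bounded by $C$. Feeding this into the unfolded remainder and using $\vol(\Ga_n\bs X)=\vol(G\bs X)\vol(\Ga_n\bs G)$ gives
$$
|\CE_n|\ \le\ \frac{C}{\vol(\Ga_n\bs X)}\int_{G\bs X}\int_{\Ga_n\bs G}\sum_{1\ne\ga\in\Ga_n}f(g^{-1}\ga g)\,dg\,d\bar x\ =\ C\,\delta_n(f)\ \longrightarrow\ 0 ,
$$
which is the assertion.

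The main obstacle is precisely this uniform fibrewise domination: without uniform discreteness the pointwise count $\#\{\ga\ne1:d(x,\ga x)<R\}$ may blow up, so Lemma \ref{lem2.3} is unavailable, and it is the global Plancherel control through a \emph{single} test function $f\ge\1_{C_0}$ — made possible by the compactness of $G\bs X$ and the relatively compact choice of $\sigma$ — that replaces it. A secondary point to verify is that the fibre operators genuinely fall under the hypothesis: factoring $\psi_p=\phi_1\ast\phi_2$ writes $R(\psi_p)=R(\phi_1)R(\phi_2)\in\CA^2(G)$, where one must ensure the convolution factors are invariant under all of $\Isom(G)$ (by averaging over the compact stabiliser of $1$ in $\Isom(G)$, which may be enlarged to keep $C_0$ invariant), or equivalently invoke the test-function form of the Plancherel condition recalled in Section 4.
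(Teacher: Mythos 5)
Your argument is correct and shares the paper's central idea: disintegrate the trace over the compact base $G\bs X$ by a measurable cross section, so that on each orbit $G\sigma[\bar x]\cong G/G_{\sigma[\bar x]}$ the restricted kernel becomes a compactly supported function on $G$ to which the Plancherel hypothesis can be applied. Where you genuinely differ is the endgame. The paper builds the direct-integral decomposition $L^2(\Ga_n\bs X)\cong\int^{\oplus}L^2(\Ga_n\bs G/G_{\sigma[x]})\,d\mu([x])$, gets pointwise convergence of the normalised fibre traces from the hypothesis, and finishes by dominated convergence, with the dominating function only asserted to exist ``by the boundedness of $k$''. You instead note that, once $\sigma$ is chosen with image in a fixed relatively compact set, every fibre function $\psi_p(h)=k(p,hp)$ is bounded by $C$ and supported in one fixed compact $C_0\subset G$, so a single nonnegative kernel dominating $\1_{C_0}$ controls all fibres at once and one application of the hypothesis yields $|\CE_n|\le C\,\delta_n(f)\to0$. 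This buys an explicit, $n$- and $[x]$-uniform domination — precisely the step the paper leaves implicit and the one that is delicate without uniform discreteness — at the cost of having to check that the dominating kernel lies in $\CA^2(G)$. For that last point you do not actually need to average over $\Isom(G)$: the paper's Example shows that radial kernels $\phi(d_G(g,h))$ lie in $\CA(G)$, and squaring one with $\phi\ge0$ supported near $0$ produces, exactly as in the proof of Theorem \ref{thm3.6}(a)$\Rightarrow$(b), an element of $\CA^2(G)$ whose kernel is $\ge1$ whenever $d_G(g,h)\le R_0$, with $R_0$ chosen so that this region contains $\{(g,h):g^{-1}h\in C_0\}$. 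Two minor points deserve a sentence each: the cross section from Cohn's theorem must be adjusted to land in a compact set meeting every $G$-orbit (such a set exists because $G\bs X$ is compact and the projection is open), and the unfolding formula is applied to the continuous $\Ga_n$-invariant function $\Phi_1(x)=\sum_{1\ne\ga}k(x,\ga x)$ via the surjectivity of $f\mapsto f^{\Ga_n}$. Neither is a gap; both are of the same standard nature as steps the paper itself takes for granted.
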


\begin{proof}
Let $\sigma:G\bs X\to X$ be a measurable section.
For $[x]=Gx\in G\bs X$ we equip the compact stabilizer group $G_\sigma[x]$ with the normalised Haar measure and the orbit $G{\sigma[x]}\cong G/G_{\sigma[x]}$ with the induced invariant measure.
For $f\in C_c(G)$ the function $f|_{G\sigma[x]}$ defines a measurable function on the orbit and in this way we get a measurable structure that defines the direct Hilbert integral 
$$
\int_{G\bs X}^\bigoplus L^2\(G\sigma[x]\)\,d\mu([x]).
$$
The map that sends $f\in C_c(G)$ to its induces element of the direct integral, is an isometry and thus extends to an isometry on $L^2(X)$. 
It is onto, since for a given $s\in \int_{G\bs X}^\bigoplus L^2\(G\sigma[x]\)\,d\mu([x])$ we define
$f(g\sigma[x])=s\(gG_{\sigma[x]}\)$, which yields a pre-image of $s$.
We get a unitary $G$-homomorphism
$$
L^2(X)\tto\cong\int_{G\bs X}^\bigoplus L^2\(G\sigma[x]\)\,d\mu([x]).
$$
By $G$-equivariance, we get
a unitary isomorphism
$$
L^2(\Ga_n\bs X)\tto\cong\int_{G\bs X}^\bigoplus L^2\(\Ga_n\bs G\sigma[x]\)\,d\mu([x])=\int_{G\bs X}^\bigoplus L^2\(\Ga_n\bs G/G_{\sigma[x]}\)\,d\mu([x]).
$$
This space is acted upon by the algebra
$$
\int_{G\bs X}^\bigoplus \HS\[L^2\(G_{\sigma[x]}\bs G/G_{\sigma[x]}\)\]\,d\mu([x]),
$$
where $\HS$ denotes the algebra of Hilbert-Schmidt operators.
The same goes for the subalgebra of trace class operators.
For $A\in\CA^2$ with kernel $k$ we find that on the orbit $G\sigma[x]$ it induces an operator $A_{\Ga_n,[x]}$ on $L^2(\Ga_n\bs G/G_{\sigma[x]})$ which is given by the restriction of $k$ to the orbit.
We get that the trace of $A_{\Ga_n}$ equals
$$
\int_{G\bs X}\tr A_{\Ga_n,[x]}\ dx.
$$
Further, $\CT(A)=\int_{G\bs X}k(\sigma[x],\sigma[x])\ dx.
$ On $L^2(\Ga_n\bs G/G_{\sigma[x]})$ the operator $A_{\Ga_n,[x]}$ acts by a continuous kernel. So if $(\Ga_n)$ is Plancherel, we get pointwise convergence of the integrands and by the boundedness of $k$ the integrands can be bounded, so that the dominated convergence theorem implies that $(\Ga_n)$ is spectrally convergent to $\{1\}$.
\end{proof}

\begin{remark}
In total, we get the following picture:
$$
\xymatrix{
(\Ga_n)\text{ is Plancherel}\ar@<-1ex>@{=>}[d]\ar@<1ex>@{=>}[r]\ar@<-1ex>@{<::}[r]
	&(\Ga_n)\text{ spectrally convergent to }\{1\}\ar@<-1ex>@{=>}[d]\\
(\Ga_n)\text{ is }BS\ar@<-1ex>@{::>}[u]\ar@{<=>}[r]
	&(\Ga_n\bs X)\text{ is }BS\ar@<-1ex>@{::>}[u]\\
}
$$
where the dotted arrows mean that this implication holds under the condition of uniform discreteness.
\end{remark}

\begin{bibdiv} \begin{biblist}

\bib{Abert1}{article}{
   author={Ab\'{e}rt, Mikl\'{o}s},
   author={Csikv\'{a}ri, P\'{e}ter},
   author={Frenkel, P\'{e}ter E.},
   author={Kun, G\'{a}bor},
   title={Matchings in Benjamini-Schramm convergent graph sequences},
   journal={Trans. Amer. Math. Soc.},
   volume={368},
   date={2016},
   number={6},
   pages={4197--4218},
   issn={0002-9947},
   review={\MR{3453369}},
   doi={10.1090/tran/6464},
}

\bib{Abert2}{article}{
   author={Ab\'{e}rt, Mikl\'{o}s},
   author={Csikv\'{a}ri, P\'{e}ter},
   author={Hubai, Tam\'{a}s},
   title={Matching measure, Benjamini-Schramm convergence and the
   monomer-dimer free energy},
   journal={J. Stat. Phys.},
   volume={161},
   date={2015},
   number={1},
   pages={16--34},
   issn={0022-4715},
   doi={10.1007/s10955-015-1309-7},
}

\bib{Abert3}{article}{
   author={Ab\'{e}rt, Mikl\'{o}s},
   author={Csikv\'{a}ri, P\'{e}ter},
   author={Frenkel, P\'{e}ter E.},
   author={Kun, G\'{a}bor},
   title={Matchings in Benjamini-Schramm convergent graph sequences},
   journal={Trans. Amer. Math. Soc.},
   volume={368},
   date={2016},
   number={6},
   pages={4197--4218},
   issn={0002-9947},
   doi={10.1090/tran/6464},
}

\bib{7Samurais}{article}{
   author={Abert, Miklos},
   author={Bergeron, Nicolas},
   author={Biringer, Ian},
   author={Gelander, Tsachik},
   author={Nikolov, Nikolay},
   author={Raimbault, Jean},
   author={Samet, Iddo},
   title={On the growth of $L^2$-invariants for sequences of lattices in Lie
   groups},
   journal={Ann. of Math. (2)},
   volume={185},
   date={2017},
   number={3},
   pages={711--790},
   issn={0003-486X},
   doi={10.4007/annals.2017.185.3.1},
}

\bib{Abert4}{article}{
   author={Abert, Miklos},
   author={Bergeron, Nicolas},
   author={Biringer, Ian},
   author={Gelander, Tsachik},
   title={Convergence of normalized Betti numbers in nonpositive curvature},
   journal={Duke Math. J.},
   volume={172},
   date={2023},
   number={4},
   pages={633--700},
   issn={0012-7094},
   doi={10.1215/00127094-2022-0029},
}

\bib{Anan}{article}{
   author={Anantharaman, Nalini},
   author={Ingremeau, Maxime},
   author={Sabri, Mostafa},
   author={Winn, Brian},
   title={Empirical spectral measures of quantum graphs in the
   Benjamini-Schramm limit},
   journal={J. Funct. Anal.},
   volume={280},
   date={2021},
   number={12},
   pages={Paper No. 108988, 52},
   issn={0022-1236},
   doi={10.1016/j.jfa.2021.108988},
}

\bib{BS-original}{article}{
   author={Benjamini, Itai},
   author={Schramm, Oded},
   title={Recurrence of distributional limits of finite planar graphs},
   journal={Electron. J. Probab.},
   volume={6},
   date={2001},
   pages={no. 23, 13},
   issn={1083-6489},
   doi={10.1214/EJP.v6-96},
}

\bib{Bruhat}{article}{
   author={Bruhat, Fran\c{c}ois},
   title={Distributions sur un groupe localement compact et applications \`a
   l'\'{e}tude des repr\'{e}sentations des groupes $\wp $-adiques},
   language={French},
   journal={Bull. Soc. Math. France},
   volume={89},
   date={1961},
   pages={43--75},
   issn={0037-9484},
}

\bib{Brumley}{article}{
   author={Brumley, Farrell},
   author={Matz, Jasmin},
   title={Quantum ergodicity for compact quotients of ${\rm SL}_d(\Bbb
   R)/{\rm SO}(d)$ in the Benjamini-Schramm limit},
   journal={J. Inst. Math. Jussieu},
   volume={22},
   date={2023},
   number={5},
   pages={2075--2115},
   issn={1474-7480},
   doi={10.1017/S147474802100058X},
}

\bib{Carderi}{article}{
   author={Carderi, Alessandro},
   title={Asymptotic invariants of lattices in locally compact groups},
   journal={C. R. Math. Acad. Sci. Paris},
   volume={361},
   date={2023},
   pages={375--415},
   issn={1631-073X},
   doi={10.5802/crmath.417},
}

\bib{Csik}{article}{
   author={Csikv\'{a}ri, P\'{e}ter},
   author={Frenkel, P\'{e}ter E.},
   title={Benjamini-Schramm continuity of root moments of graph polynomials},
   journal={European J. Combin.},
   volume={52},
   date={2016},
   number={part B},
   part={part B},
   pages={302--320},
   issn={0195-6698},
   doi={10.1016/j.ejc.2015.07.009},
}

\bib{Cohn}{book}{
   author={Cohn, Donald L.},
   title={Measure theory},
   series={Birkh\"{a}user Advanced Texts: Basler Lehrb\"{u}cher. [Birkh\"{a}user
   Advanced Texts: Basel Textbooks]},
   edition={2},
   publisher={Birkh\"{a}user/Springer, New York},
   date={2013},
   pages={xxi+457},
   isbn={978-1-4614-6955-1},
   isbn={978-1-4614-6956-8},
   doi={10.1007/978-1-4614-6956-8},
}

\bib{HA2}{book}{
   author={Deitmar, Anton},
   author={Echterhoff, Siegfried},
   title={Principles of harmonic analysis},
   series={Universitext},
   edition={2},
   publisher={Springer},
   date={2014},
   pages={xiv+332},
   isbn={978-3-319-05791-0},
   isbn={978-3-319-05792-7},
   doi={10.1007/978-3-319-05792-7},
}

\bib{traceclass}{article}{
   author={Deitmar, Anton},
   author={van Dijk, Gerrit},
   title={Trace class groups},
   journal={J. Lie Theory},
   volume={26},
   date={2016},
   number={1},
   pages={269--291},
   issn={0949-5932},
}

\bib{BS-AD}{article}{
   author={Deitmar, Anton},
   title={Benjamini-Schramm and spectral convergence},
   journal={Enseign. Math.},
   volume={64},
   date={2018},
   number={3-4},
   pages={371--394},
   issn={0013-8584},
   doi={10.4171/LEM/64-3/4-8},
}

\bib{Farber}{article}{
   author={Farber, Michael},
   title={Geometry of growth: approximation theorems for $L^2$ invariants},
   journal={Math. Ann.},
   volume={311},
   date={1998},
   number={2},
   pages={335--375},
   issn={0025-5831},
   doi={10.1007/s002080050190},
}

\bib{Fracyk}{article}{
   author={Fraczyk, Mikolaj},
   title={Strong limit multiplicity for arithmetic hyperbolic surfaces and
   3-manifolds},
   journal={Invent. Math.},
   volume={224},
   date={2021},
   number={3},
   pages={917--985},
   issn={0020-9910},
}

\bib{Gelander}{article}{
   author={Gelander, Tsachik},
   title={A view on invariant random subgroups and lattices},
   conference={
      title={Proceedings of the International Congress of
      Mathematicians---Rio de Janeiro 2018. Vol. II. Invited lectures},
   },
   book={
      publisher={World Sci. Publ., Hackensack, NJ},
   },
   date={2018},
   pages={1321--1344},
}

\bib{Kionke}{article}{
   author={Kionke, Steffen},
   author={Schr\"{o}dl-Baumann, Michael},
   title={Equivariant Benjamini-Schramm convergence of simplicial complexes
   and $\ell^2$-multiplicities},
   journal={J. Topol. Anal.},
   volume={14},
   date={2022},
   number={1},
   pages={33--53},
   issn={1793-5253},
   doi={10.1142/S1793525321500126},
}

\bib{LMS}{article}{
   author={Le Masson, Etienne},
   author={Sahlsten, Tuomas},
   title={Quantum ergodicity and Benjamini-Schramm convergence of hyperbolic
   surfaces},
   journal={Duke Math. J.},
   volume={166},
   date={2017},
   number={18},
   pages={3425--3460},
   issn={0012-7094},
   doi={10.1215/00127094-2017-0027},
}

\bib{Levit}{article}{
   author={Levit, Arie},
   title={On Benjamini-Schramm limits of congruence subgroups},
   journal={Israel J. Math.},
   volume={239},
   date={2020},
   number={1},
   pages={59--73},
   issn={0021-2172},
   doi={10.1007/s11856-020-2043-7},
}

\bib{Lotz}{article}{
   author={Lotz, Heinrich P.},
   title={Measurable cross sections},
   journal={Arch. Math. (Basel)},
   volume={41},
   date={1983},
   number={3},
   pages={267--269},
   issn={0003-889X},
   doi={10.1007/BF01194838},
}

\bib{Mohammadi}{article}{
   author={Mohammadi, Amir},
   author={Rafi, Kasra},
   title={Benjamini-Schramm convergence of periodic orbits},
   journal={Geom. Dedicata},
   volume={216},
   date={2022},
   number={5},
   pages={Paper No. 57, 11},
   issn={0046-5755},
   doi={10.1007/s10711-022-00703-9},
}

\bib{Monk}{article}{
   author={Monk, Laura},
   title={Benjamini-Schramm convergence and spectra of random hyperbolic
   surfaces of high genus},
   journal={Anal. PDE},
   volume={15},
   date={2022},
   number={3},
   pages={727--752},
   issn={2157-5045},
   doi={10.2140/apde.2022.15.727},
}

\end{biblist} \end{bibdiv}

{\small Mathematisches Institut\\
Auf der Morgenstelle 10\\
72076 T\"ubingen\\
Germany\\
\tt deitmar@uni-tuebingen.de}

\today

\end{document}